\numberwithin{equation}{section}
\theoremstyle{plain}
\newtheorem{theorem}[equation]{Theorem}
\newtheorem{lemma}[equation]{Lemma}
\newtheorem{proposition}[equation]{Proposition}
\newtheorem{corollary}[equation]{Corollary}
\theoremstyle{definition}
\newtheorem{definition}[equation]{Definition}
\newtheorem{remark}[equation]{Remark}
\newcommand*{\NN}{\mathbb{N}}
\newcommand*{\RR}{\mathbb{R}}
\newcommand*{\eps}{\varepsilon}
\newcommand*{\Om}{\Omega}
\providecommand*{\coloneq}{\mathbin{:=}}
\newcommand*{\dOm}{\partial\Omega}
\newcommand*{\dd}{{\mathrm d}}
\newcommand*{\loc}{{\mathrm{loc}}}
\newcommand*\lapl{\mathop{{}\Delta}\nolimits}
\DeclareMathOperator{\dist}{dist}
\DeclareMathOperator{\rad}{rad}
\DeclareMathOperator{\diam}{diam}
\begin{document}
\title[Neumann problem for $p$-Laplace equation in metric spaces]{Neumann problem
for $p$\,-Laplace equation\\in metric spaces using a variational approach:
existence, boundedness, and boundary regularity}
\subjclass[2010]{Primary 31E05; Secondary 30L99, 30E25, 46E35.}
\keywords{$p$-Laplace equation, Neumann boundary value problem, metric measure
space, variational problem, boundary regularity, Newtonian space, traces, De
Giorgi type inequality.}
\author[{L.\@ Mal\'{y}\and N.\@ Shanmugalingam}]{Luk\'{a}\v{s} Mal\'{y}\and Nageswari Shanmugalingam}
\address{University of Cincinnati\\Department of Mathematical Sciences\\P.O.~Box~210025\\Cincinnati, OH 45221-0025\\USA}
\email{malyls@ucmail.uc.edu}
\email{shanmun@uc.edu}
\thanks{The research of L.M. was supported by the Knut and Alice Wallenberg Foundation,
and the research of N.S. was partially supported by the NSF grant~\#DMS-1500440
(U.S.). The authors thank John Lewis for pointing out the
reference~\cite{GaMPR}.}
\date{September 21, 2016}
\begin{abstract}
We employ a variational approach to study the Neumann boundary value problem
for the $p$-Laplacian on bounded smooth-enough domains in the metric setting,
and show that solutions exist and are bounded. The boundary data considered
are Borel measurable bounded functions. We also study boundary continuity
properties of the solutions. One of the key tools utilized is the trace
theorem for Newton-Sobolev functions, and another is an analog of the De
Giorgi inequality adapted to the Neumann problem.
\end{abstract}
\maketitle
\section{Overview}
Amongst the two types of boundary value problems in PDEs, Dirichlet and Neumann
problems, the Dirichlet problem is currently the most well-studied. In the
Euclidean setting, much of the research on Neumann boundary value problem
focused on the zero boundary value problem, the so-called natural boundary
value. The general Neumann boundary value problem for the $p$-Laplacian is the
following: find $u$ in the appropriate Sobolev class such that
\begin{empheq}[left=\empheqlbrace]{alignat=2}
  \notag-\lapl_p u & = 0&\quad& \text{in } \Om,\\
  \:-|\nabla u|^{p-2} \partial_\eta u & = f&& \text{on } \dOm,
  \label{eq:PDE}
\end{empheq}
which, in its weak formulation, would mean finding $u$ in the Sobolev class
$W^{1,p}(\Om)$ such that whenever $\varphi\in W^{1,p}(\Om)$,
\[
 \int_\Om |\nabla u(x)|^{p-2}\nabla u(x)\cdot\nabla\varphi(x)\, dx
 +\int_{\dOm}|\nabla u(\zeta)|^{p-2}\varphi(\zeta)\partial_\eta u(\zeta)d\mathcal{H}^{n-1}(\zeta)=0,
\]
that is,
\[
 \int_\Om |\nabla u(x)|^{p-2}\nabla u(x)\cdot\nabla\varphi(x)\, dx
 -\int_{\dOm}\varphi(\zeta)f(\zeta) \, d\mathcal{H}^{n-1}(\zeta)=0,
\]
where $\partial_\eta u(\zeta)$ is the directional derivative of $u$ in the
direction of the outer normal to $\dOm$ at $\zeta$. Its variational formulation
is to find $u$ that minimizes the functional
\begin{equation*}
  I(u) = \int_\Om |\nabla u|^p\,dx + \int_{\dOm} uf\,dP_\Om,
\end{equation*}
where $|\nabla u|$ will be replaced by the (minimal $p$-weak) upper gradient of
$u$ in the metric setting.

In~\cite{KK} issues of existence and stability of solutions to the general
Neumann boundary value problem for a class of $p$-Laplace-type operators were
considered. The paper~\cite{RS} gave a computational scheme for constructing
solutions to the general Neumann boundary value problem for the Laplacian in
three-dimensional Euclidean domains with piecewise smooth boundary. The
paper~\cite{CM} of Cianchi and Maz$'$ya studied regularity of solutions to the
Neumann boundary value problem, for Lipschitz domains, related to the
$p$-Laplace and more general operators, but the Neumann data they consider is
the natural boundary condition, i.e., constant zero data. The work of Agmon,
Douglis and Nirenberg~\cite{ADN1, ADN2}, M. Taylor~\cite{Tay},
Cranny~\cite{Cr}, and the recent work of Kenig, Lin, and Shen~\cite{KLS},
Milakis and Silvestre~\cite{MS}, studied regularity of solutions to the general
Neumann problem for homogeneous and rapidly oscillating elliptic PDEs for
$C^{1,\alpha}$-domains in Euclidean setting. The work of Dancer, Daners and
Hauer~\cite{DDH} explored the behavior of solutions to the zero (natural)
Neumann boundary value problem for the $p$-Laplacian on Euclidean domains whose
complement is a compact set, showing that solutions that have a certain decay
property at $\infty$ (decay to zero) have to vanish identically on the domain.

The study of the Neumann problem in non-smooth settings is currently sparse. In
the more general setting of Carnot groups, Nhieu~\cite{N} studied the existence
and uniqueness of solutions to the Neumann boundary value problem for the
sub-Laplacian operator (corresponding to $p=2$) on bounded Lipschitz domains.
More explicit computations, in terms of Green's functions, were given by Dubey,
Kumar, and Misra in~\cite{DKM}. Mixed boundary problems and homogenization for
domains in Heisenberg groups were considered by Tchou~\cite{Tch}, Biroli,
Tchou, and Zhikov~\cite{BTZ}. In the non-smooth metric setting, an analog of
the Dirichlet problem for the $p$\mbox{-}Laplacian was initiated in~\cite{KS}
and is currently an active area of research (see for example~\cite{BBbook}). In
this paper, we propose an analog of the Neumann boundary value problem adapted
to the non-smooth setting by using the tools of calculus of variation and ideas
due to De Giorgi, Giaquinta, and Giusti.

The challenge in our situation is three-fold. First, unlike~\cite{ADN1, ADN2,
Tay, KLS, N, DKM, Tch, BTZ}, our problem is non-linear even in the Euclidean
setting ($p$-Laplacian with $1<p<\infty$); second, lack of smoothness
structure, especially at the boundary, and so we have no notion of $C^1$-domain
in the metric setting; third, unlike in~\cite{N, DKM, Tch, BTZ}, lack of the
Euler--Lagrange (PDE) equation corresponding to the energy minimization
problem, since the upper gradient structure on the metric space might not come
from an inner product structure. We therefore do not have access to tools such
as the Euler--Lagrange equation nor layer potentials as used for example in the
work of Maz$'$ya and Poborchi~\cite{MP}. Thus the results obtained in this
paper are, not surprisingly, weaker than those of~\cite{ADN1, ADN2, Tay, KLS},
but on the other hand, they are applicable to a wider class of operators than
linear elliptic operators and are applicable to a wider range of domains even
in the Euclidean setting (such as Lipschitz domains that might perhaps not  be
$C^1$-domains).  We show that solutions exist (Theorem~\ref{thm:exists}) and
are bounded at the boundary of the domain (Theorem~\ref{thm:bounded}). As
mentioned above, the key step is to identify an analog of the De Giorgi
inequality adapted to the problem, see Theorem~\ref{thm:DeGiorgi}. Furthermore,
we apply this version of De Giorgi inequality to prove continuity of solutions
for certain values of $p$ at a.e.\@ boundary point as well as at every boundary
point in whose neighborhood the Neumann data does not change its sign
(Theorem~\ref{thm:Main2} and~Theorem~\ref{thm:Main3}).

The paper~\cite{GaMPR} by Garc\'ia-Azorero, Manfredi, Peral and Rossi studied
the Neumann boundary value problem for the $p$-Laplace operator in the
Euclidean setting and showed for smooth domains with continuous boundary data
that the the solutions for a given data are unique up to additive constants.
Their proof used the Euler--Lagrange formulation of the problem, an approach
that is not available to us in the non-smooth setting. We obtain a weaker
uniqueness property, namely that the minimal $p$-weak upper gradients of the
solutions are all equal, see Lemma~\ref{lem:almost-unique}. However, if the
metric measure space $X$ has a Cheeger-type differential structure (that is, a
first order Taylor theorem is satisfied for Lipschitz functions with respect to
a vector bundle on $X$) such that the minimal $p$-weak upper gradient of $u \in
N^{1,p}(X)$ is equal to the norm of its Cheeger derivative, then we can
conclude from Lemma~\ref{lem:almost-unique} that solution is, in fact, unique
(up to an additive constant), taking into consideration also that the set of
solutions form a convex set. Metric spaces endowed with such a differential
structure are said to be infinitesimally Hilbertian, see \cite{AGS} and
\cite{Gig}. Even in some weighted Euclidean setting, we do not have this
Hilbertian property, see~\cite{KSZ}. In infinitesimally Hilbertian spaces, one
has also access to the corresponding Euler--Lagrange equation, which enables
obtaining the uniqueness of solutions (up to an additive constant) via PDE
methods.

The rest of this paper is organized as follows. We give the needed definitions
in Section~2. In Section~3 we describe the Neumann problem in the metric
setting using the language of calculus of variation, and discuss the needed
tool of boundary trace of Sobolev functions. Existence of solutions for bounded
boundary data is studied in Section~4, with Theorem~\ref{thm:exists} declaring
the existence of solutions. A weak analog of uniqueness of solutions is given
in Lemma~\ref{lem:almost-unique} in this section as well. The focus of
Section~5 is to prove that the solutions are necessarily bounded, see
Theorem~\ref{thm:bounded}. The key inequality that is an analog of the De
Giorgi inequality is also given in this section, in Theorem~\ref{thm:DeGiorgi}.
In Section~6 we discuss regularity of solutions at the boundary, and show that
at boundary points where the boundary data is non-negative the solution must
necessarily be a subminimizer and hence is upper semicontinuous there. For
metric spaces with measure $\mu$ that have a strong regularity, known as
Ahlfors regularity, we show in the final section of this paper that the
solutions are continuous at boundary points where the boundary data does not
change sign (Theorem~\ref{thm:Main2}) and that the solutions are continuous at
Hausdorff co-dimension $1$-almost every boundary point
(Theorem~\ref{thm:Main3}).
\section{Preliminaries}
The triplet $(X,\dd,\mu)$ denotes a metric measure space. We say that $\mu$ is
\emph{doubling} if there is a constant $C_D$ such that for each $x\in X$ and
$r>0$,
\[
  0<\mu(B(z,2r))\le C_D\, \mu(B(z,r))<\infty.
\]
\begin{lemma}[{see e.g.\@ \cite[Lemma~3.3]{BBbook}}]
There is $s>0$ such that
\begin{equation}
\label{eq:mu-dimension}
  \frac{\mu(B(x,r))}{\mu(B(y,R))} \ge C \biggl(\frac{r}{R} \biggr)^s
\end{equation}
for all $0<r\le R$, $y\in X$, and $x\in B(y,R)$.
\end{lemma}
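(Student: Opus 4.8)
The plan is to derive \eqref{eq:mu-dimension} from the doubling property by a standard iteration argument, exhibiting $s = \log_2 C_D$ as the working exponent and $C$ as a constant depending only on $C_D$. First I would reduce to comparing a small ball with a larger ball that contains it. Since $x \in B(y,R)$, the larger ball $B(x, 2R)$ contains $B(y,R)$, so by monotonicity of the measure $\mu(B(y,R)) \le \mu(B(x,2R))$; thus it suffices to bound $\mu(B(x,r))/\mu(B(x,2R))$ from below, i.e.\@ to compare two \emph{concentric} balls about the same center $x$. Renaming, it is enough to show that for $0 < r \le \rho$ one has $\mu(B(x,r)) \ge c\,(r/\rho)^s\,\mu(B(x,\rho))$ with $c = c(C_D)$; applying this with $\rho = 2R$ and absorbing the extra factor $2^s$ into the constant $C$ then gives the claim.

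Next I would run the dyadic iteration. Choose the integer $k \ge 0$ with $2^k r \le \rho < 2^{k+1} r$ (this uses $r \le \rho$, so $k$ is well-defined and nonnegative). Applying the doubling inequality $\mu(B(x,2t)) \le C_D\,\mu(B(x,t))$ successively at radii $t = r, 2r, \dots, 2^{k-1}r$ yields
\[
  \mu\bigl(B(x,2^k r)\bigr) \le C_D^{\,k}\,\mu\bigl(B(x,r)\bigr),
\]
and one more application gives $\mu(B(x,2^{k+1}r)) \le C_D^{\,k+1}\,\mu(B(x,r))$. Since $\rho < 2^{k+1} r$, monotonicity gives $\mu(B(x,\rho)) \le \mu(B(x,2^{k+1}r)) \le C_D^{\,k+1}\,\mu(B(x,r))$, hence
\[
  \mu(B(x,r)) \ge C_D^{-(k+1)}\,\mu(B(x,\rho)).
\]
It remains to convert $C_D^{-(k+1)}$ into a power of $r/\rho$. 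From $2^k r \le \rho$ we get $2^{-k} \ge r/\rho$, equivalently $2^{-(k+1)} \ge \tfrac12\,(r/\rho)$; raising to the power $s = \log_2 C_D$ (so that $2^{-(k+1)s} = C_D^{-(k+1)}$) gives $C_D^{-(k+1)} \ge 2^{-s}\,(r/\rho)^s$. Combining the last two displays yields $\mu(B(x,r)) \ge 2^{-s}\,(r/\rho)^s\,\mu(B(x,\rho))$, which is the concentric estimate with $c = 2^{-s}$.

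Finally I would assemble the pieces: with $\rho = 2R$ and using $\mu(B(y,R)) \le \mu(B(x,2R)) = \mu(B(x,\rho))$ from the first step,
\[
  \mu(B(x,r)) \ge 2^{-s}\Bigl(\frac{r}{2R}\Bigr)^{s} \mu(B(x,2R)) \ge 2^{-2s}\Bigl(\frac{r}{R}\Bigr)^{s}\,\mu(B(y,R)),
\]
so \eqref{eq:mu-dimension} holds with $C = 2^{-2s}$ and $s = \log_2 C_D$. There is no real obstacle here; the only point requiring a little care is bookkeeping the off-by-one in the exponent $k$ versus $k+1$ and making sure the geometric comparison $B(y,R) \subset B(x,2R)$ is invoked so that the final estimate relates balls with \emph{different} centers, exactly as stated. (If one wants the conclusion also for $r = R$ with $x = y$, it is the trivial case $k = 0$.)
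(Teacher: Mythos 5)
Your argument is correct and is precisely the standard dyadic-iteration proof that the cited source (Björn–Björn, Lemma 3.3) uses; the paper itself does not reproduce a proof but simply refers to that reference. The bookkeeping with $k$ vs.\@ $k+1$ and the inclusion $B(y,R)\subset B(x,2R)$ are both handled correctly, and the exponent $s=\log_2 C_D$ with $C=2^{-2s}$ is the usual outcome.
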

Note that we can always take $s$ to be as large as we wish. Therefore from now
onwards we assume that $s>1$. We also say that $\mu$ is \emph{Ahlfors
$s$-regular at scale $r_0>0$} if there is a constant $C>0$ such that whenever
$x\in X$ and $0<r<r_0$, we have
\[
\frac{1}{C}r^s\le \mu(B(x,r))\le C\, r^s.
\]

In what follows, the space $L^1_\loc(X)$ consists of functions on $X$ that are
integrable on \emph{bounded} subsets of $X$.

A Borel function $g: X\to[0,\infty]$ is an upper gradient of
$u:X\to\RR\cup\{\pm\infty\}$ if the following inequality holds for all
(rectifiable) curves $\gamma:[a,b]\to X$, (denoting  $x=\gamma(a)$ and
$y=\gamma(b)$),
\[
  |u(y)-u(x)|\leq\int_{\gamma}g\,ds
\]
whenever $u(x)$ and $u(y)$ are both finite, and $\int_{\gamma}g\,ds=\infty$
otherwise. The notion of upper gradients, first formulated in~\cite{HK} (with
the terminology ``very weak gradients''), plays the role of $|\nabla u|$ in the
metric setting where no natural distributional derivative structure exists.
\begin{definition}
\label{df:N1p}
The \emph{Newtonian space} $N^{1,p}(X)$ is defined by
\[
  N^{1,p}(X) = \Bigl\{u\in L^p(X): \|u\|_{N^{1,p}(X)} \coloneq \| u \|_{L^p(X)} +
   \inf_g \|g\|_{L^p(X)} <\infty \Bigr\},
\]
where the infimum is taken over all upper gradients $g$ of $u$.
\end{definition}
Let us point out that we assume that functions are defined everywhere, and not
just up to equivalence classes \mbox{$\mu$-}almost everywhere. This is
essential for the notion of upper gradients since they are defined by a
pointwise inequality.
\begin{definition}\label{def:rel-cap}
Given a ball $B=B(x,r)\subset X$ and a set $E\subset B$, the \emph{relative
$p$-capacity} of $E$ with respect to $2B=B(x,2r)$ is given by
\[
  \text{cap}_p(E,2B)\coloneq\inf_u \int_{2B}g_u^p\, d\mu,
\]
where the infimum is over all functions $u\in N^{1,p}(X)$ for which $u\ge 1$ on
$E$ and $u=0$ on $X\setminus 2B$.
\end{definition}
It follows from~\cite[Proposition~6.16]{BBbook} that
\begin{equation}\label{eq:rel-cap-Ball}
  \frac{\mu(E)}{C\, r^p}\le \text{cap}_p(E,2B)\le C\, \frac{\mu(B)}{r^p}.
\end{equation}
\begin{definition}[cf.\@ \cite{A1}]
A metric space $X$ supports a \emph{$p$-Poincar\'{e} inequality} with $p\in[1,
\infty)$ if there exist positive constants $\lambda$ and $C$ such that for all
balls $B\subset X$ and all $u\in L^1_\loc(X)$,
\begin{equation}
  \label{eq:pPI}
  \fint_{B}|u-u_{B}|\,d\mu\leq C\rad(B)\biggl(\fint_{\lambda B} g^p\,d\mu\biggr)^{1/p}\,.
\end{equation}
\end{definition}
Here and in the rest of the paper, $f_A$ denotes the \emph{integral mean} of a
function $f\in L^0(X)$ over a measurable set $A \subset X$ of finite positive
measure, defined as
\[
  f_A = \fint_A f\,d\mu = \frac{1}{\mu(A)} \int_A f\,d\mu
\]
whenever the integral on the right-hand side exists, not necessarily finite
though. Furthermore, given a ball $B=B(x,r) \subset X$ and $\lambda>0$, the
symbol $\lambda B$ denotes the inflated ball $B(x, \lambda r)$.

We next give an analog of the notion of sets of finite perimeter, as formulated
in~\cite{M}, see~\cite{EvaG92, AFP, Zie89} for the Euclidean setting.
\begin{definition}\label{def:finitePerimeter}
A Borel set $E\subset X$ is said to be of finite perimeter if there is a
sequence $(u_k)_{k\in\NN}$ from $N^{1,1}(X)$ such that $u_k\to\chi_E$ in
$L^1(X)$ and
\[
  \liminf_{k\to\infty}\int_Xg_{u_k}\, d\mu<\infty.
\]
The \emph{perimeter} $P_E(X)$ of $E$ is the infimum of the above limit infima
over all such sequences $(u_k)_k$ as above. Given an open set $U\subset X$, the
perimeter of $E$ in $U$ is
\[
  P_E(U)=\inf\biggl\{
   \liminf_{k\to\infty}\int_Ug_{u_k}\, d\mu \colon
  (u_k)_{k\in\NN}\subset N^{1,1}(U), u_k\to \chi_{E\cap U}\text{ in }L^1(U)\biggr\}.
\]
\end{definition}
An analogous notion (using Lipschitz functions rather than functions in
$N^{1,1}(X)$) was proposed in~\cite{M}, but the notion given there agrees with
ours when the measure on $X$ is doubling and $X$ supports a $1$-Poincar\'e
inequality. A direct translation of the proof given in~\cite{M} shows that the
Carath\'eodory extension of $P_E$ to subsets of $X$ is a finite Radon measure
on $X$. In~\cite{A1}, Ambrosio demonstrated that if the measure on $X$ is
doubling and supports a $1$-Poincar\'e inequality, then the Radon measure $P_E$
is equivalent to the co-dimension $1$ Hausdorff measure restricted to the
measure-theoretic boundary $\partial_mE$ of $E$. Here, $x\in \partial_mE$ if
and only if $x\in X$ and
\[
\limsup_{r\to0^+}\frac{\mu(B(x,r)\cap E)}{\mu(B(x,r))}>0\quad\text{and}\quad
\limsup_{r\to0^+}\frac{\mu(B(x,r)\setminus E)}{\mu(B(x,r))}>0.
\]

Given $A\subset X$, we define its \emph{co-dimension $1$ Hausdorff measure}
$\mathcal{H}(A)$ by
\begin{equation}\label{eq:deff-mathcal}
\mathcal{H}(A)=\lim_{\delta\to 0^+}\ \inf\biggl\{\sum_i\frac{\mu(B_i)}{\rad(B_i)}
\colon B_i\text{ balls in }X, \rad(B_i)<\delta, A\subset \bigcup_iB_i\biggr\}.
\end{equation}

Thus, the results of~\cite{A1} show that there is a constant $C\ge 1$ such that
whenever $E\subset X$ is of finite perimeter and $K\subset X$ is a Borel set,
we must have
\[
\frac{1}{C}\mathcal{H}(K\cap\partial_mE)\le P_E(K)\le C\mathcal{H}(K\cap\partial_mE).
\]
See~\cite{M, A1, AMP, AFP} for more on sets of finite perimeter and associated
functions of bounded variation in the metric setting. The paper~\cite{ADG}
studies connections between the relaxation of the co-dimension $1$ Minkowski
content of the boundary and the perimeter measure.
\section{Statement of the problem and standing assumptions}
In this paper, $1<p<\infty$ and $X$ is a complete metric space equipped with a
doubling measure $\mu$ supporting a $p$-Poincar\'e inequality.
\begin{definition}\label{defn:problem}
Let $\Om$ be a bounded domain (non-empty, connected open set) in $X$ with
$X\setminus \Om$ of positive measure such that $\Om$ is also of finite
perimeter with perimeter measure $P_\Om$. Let $f:\dOm\to\RR$ be a bounded
$P_\Om$-measurable function with $\int_{\dOm}f\, dP_\Om=0$. We say that a
function $u:\Om\to\RR$ is a \emph{$p$-harmonic solution to the Neumann boundary
value problem with boundary data} $f$ if $u\in N^{1,p}(\Om)$ and
\begin{equation}\label{eq:I-functional}
I(u)\coloneq \int_\Om g_u^p\, d\mu+\int_{\dOm}Tu\, f\, dP_\Om\le
\int_\Om g_v^p\, d\mu+\int_{\dOm}Tv\, f\, dP_\Om=I(v)
\end{equation}
for every $v\in N^{1,p}(\Om)$. Here $g_u$ and $g_v$ are the minimal $p$-weak
upper gradients of $u$ and $v$ in $\Om$, respectively, and $Tu$ and $Tv$ denote
the traces of $u$ and $v$ on $\dOm$, respectively.
\end{definition}
When considering the original Neumann boundary value problem~\eqref{eq:PDE}, we
see that adding a constant to a solution gives us another solution. Thus, the
Neumann boundary data $f$ has to satisfy the compatibility condition
\[
  \int_{\dOm} f\,dP_\Om = 0
\]
so that the value of the functional $I$ as defined in \eqref{eq:I-functional}
is invariant with respect to adding a constant to a solution.
\begin{definition}[Assumptions on $\Om$]\label{Assume-Om}
We will assume in this paper that here is a constant $C\ge 1$ such that for all
$x\in \dOm$, $z\in\Om$, and $0<r\le \diam(\Om)$, we have
\begin{equation}\label{density}
 \mu(B(z,r)\cap\Om)\ge C^{-1}\mu(B(z,r)),
\end{equation}
and
\begin{equation}\label{boundary-Ahlfors-regularity}
 C^{-1}\frac{\mu(B(x,r))}{r}\le P_\Om(B(x,r))\le C\frac{\mu(B(x,r))}{r}.
\end{equation}
We also assume that $(\Om, d|_\Om, \mu\lfloor_\Om)$ admits a $p$-Poincar\'e
inequality with dilation factor $\lambda=1$, where $p \in (1, \infty)$ is equal
to the exponent in \eqref{eq:I-functional}.
\end{definition}
Under the above assumptions, we also have a Sobolev-type inequality for
$\Om$,
\begin{equation}
  \label{eq:SobolevEmb-Om}
  \| u - u_\Om\|_{L^p(\Om)} \le C \| g_u \|_{L^p(\Om)},
\end{equation}
where $C=C(\Om, C_D, p, \ldots)$. This Sobolev-type embedding follows from
classical embedding results of~\cite{HaK}.

The property of satisfying~\eqref{boundary-Ahlfors-regularity} will be called
\emph{Ahlfors codimension $1$ regularity} of $P_\Om$.

The condition~\eqref{density} together with
condition~\eqref{boundary-Ahlfors-regularity} implies that $\mu(\dOm)=0$, and
that $\Om$ is of finite perimeter. It follows by the results of
Ambrosio~\cite{A1} that if $X$ supports a $1$-Poincar\'e inequality, then
$P_\Om\approx\mathcal{H}\vert_{\dOm}$; thus the above
condition~\eqref{boundary-Ahlfors-regularity} remains valid (with a different
constant $C$ perhaps) if $P_\Om$ is replaced with $\mathcal{H}$. Examples of
domains satisfying the above conditions include domains with quasiminimal
boundary surfaces as studied in~\cite{KinnKorLorSh}.

Domains that are sets of finite perimeter are the natural class of domains for
which the Neumann boundary value problem makes sense, as this is the largest
class of domains for which, at least in the Euclidean setting, a form of
Gauss-Green theorem holds true, see the work~\cite{CTZ} of Chen, Torres and
Ziemer (for metric space analogs see~\cite{MMS}).

The assumption that $\lambda=1$ in the $p$-Poincar\'e inequality supported by
$\Om$ is satisfied for example if $\Om$ is a geodesic domain, that is, for each
$x,y\in\Om$ there is a curve $\gamma\subset\Om$ with end points $x,y$ such that
the length of $\gamma$ is equal to $d(x,y)$. It then follows from the results
of~\cite{HaK} that the factor $\lambda$ in the $p$-Poincar\'e inequality can be
chosen to equal $1$ (perhaps at the expense of a larger constant $C$). The
assumption that $\lambda=1$ is a mere technicality here, assumed for the sake
of simplifying the computations; they get more complicated when $\lambda>1$,
but the results still remain true as an interested reader can verify.
\begin{definition}[Traces of Sobolev functions on $\dOm$]\label{defn:traces}
Under the standing assumptions on $\Om$ given above in
Definition~\ref{Assume-Om}, there is a bounded linear \emph{trace operator}
\[
T: N^{1,p}(\Om) \to L^{\widetilde{p}}(\dOm)
\]
for every $\widetilde{p} < p^*$, where $p^* = p(s-1)/(s-p)$ if $p<s$, and $p^*
= \infty$ if $p\ge s$. This trace operator is given as follows. For $u\in
N^{1,p}(\Om)$, $\mathcal{H}$-almost every $x\in \dOm$, there exists
$Tu(x)\in\RR$ such that
\[
  \lim_{r\to 0^+}\fint_{B(x,r)\cap\Om}|u-Tu(x)|\, d\mu = 0.
\]
Here, $s$ is the lower mass bound exponent from~\eqref{eq:mu-dimension}. If
$p>s$, then we can allow for $\widetilde{p}=\infty$ as well, though this is not
of importance to us in this paper.
\end{definition}
Existence of such a trace operator follows from~\cite[Theorem~3.4]{LS}. The
following trace theorem is a specific case of the trace theorem found
in~\cite{MalTr}, but for the convenience of the reader we provide its proof
here.
\begin{proposition}[{cf.\@ \cite{MalTr}}]
\label{pro:trace}
Assume that $\Om$ is a length space and that the dilation factor $\lambda = 1$
in the Poincar\'e inequality \eqref{eq:pPI}. Suppose that $p < s$. Let
$\widetilde{p} \in (p, p^*)$. Then, the trace operator $T: N^{1,p}(\Om) \to
L^{\widetilde{p}}(\dOm)$ is linear, bounded, and for every $0<\eps<(s-1)(p^* -
\widetilde{p})/(\widetilde{p}p^*)$ there is $C>0$ such that $T$ satisfies
\begin{align*}
  \| Tu \|_{L^{\widetilde{p}}(\dOm \cap B)} & \le C \biggl(r^{(s-1)(\frac{1}{\widetilde{p}}-\frac{1}{p^*})-\eps}\|g_u\|_{L^p(\Om \cap  B)}
  + \frac{P_\Om(\dOm \cap B)^{1/\widetilde{p}}}{\mu(\Om \cap B)} \|u\|_{L^1(\Om \cap B)}\biggr) \\
  & = C \biggl(r^{1-\frac{1}{\widetilde{p}} - \aleph}\|g_u\|_{L^p(\Om \cap  B)}
  + \frac{P_\Om(\dOm \cap B)^{1/\widetilde{p}}}{\mu(\Om \cap B)} \|u\|_{L^1(\Om \cap B)}\biggr)
\end{align*}
for every ball $B=B(z,r)$ with $z\in\dOm$, where $\aleph = s(\frac{1}{p} -
\frac{1}{\widetilde{p}})+\eps$. If $\mu$ is Ahlfors $s$-regular at scale
$r_0>0$, then the estimates above hold with $\eps=0$ whenever $r<r_0$.
\end{proposition}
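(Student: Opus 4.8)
The plan is to estimate the trace of $u$ on $\dOm\cap B$ by discretizing $\Om\cap B$ into a Whitney-type decomposition by balls whose radii are comparable to their distance to $\dOm$, and then summing up the telescoping pointwise oscillation estimates that the Poincar\'e inequality furnishes along chains of such balls reaching the boundary. Concretely, for $z\in\dOm$ and a fixed small scale, I would set $B_j=B(z,2^{-j}r)$ for $j\ge 0$ and note that, since $\Om$ satisfies the density condition \eqref{density}, each $\Om\cap B_j$ has measure comparable to $\mu(B_j)$; by the $p$-Poincar\'e inequality on $\Om$ with $\lambda=1$ (the hypothesis that $\Om$ is a length space guarantees this dilation can be taken to be $1$), the integral averages $u_{\Om\cap B_j}$ form a Cauchy sequence whose limit is exactly $Tu(z)$ for $\mathcal H$-a.e.\ $z$, as in Definition~\ref{defn:traces}. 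The chaining gives the pointwise bound
\[
  |Tu(z)-u_{\Om\cap B_0}|\le C\sum_{j\ge 0}2^{-j}r\Bigl(\fint_{\Om\cap B_j}g_u^p\,d\mu\Bigr)^{1/p},
\]
and one controls $u_{\Om\cap B_0}$ crudely by $\mu(\Om\cap B)^{-1}\|u\|_{L^1(\Om\cap B)}$, which accounts for the second term on the right-hand side once one inserts $P_\Om(\dOm\cap B)^{1/\widetilde p}$ via \eqref{boundary-Ahlfors-regularity}.

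Next I would raise the pointwise estimate to the $\widetilde p$-th power and integrate $d P_\Om$ over $\dOm\cap B$. The main point is that the discrete sum over $j$ must be handled so as to exchange the order of summation and integration; using H\"older's inequality in $j$ with a summable geometric weight and \eqref{boundary-Ahlfors-regularity} to compare $P_\Om(\dOm\cap B_j)$ to $\mu(B_j)/(2^{-j}r)$, one is left with a sum of terms of the form $(2^{-j}r)^{\beta}\|g_u\|_{L^p(\Om\cap B_j)}^{?}$. Here is where the loss $\eps$ enters: to make the geometric series converge one trades a tiny power of the radius, which is precisely the freedom $0<\eps<(s-1)(p^*-\widetilde p)/(\widetilde p p^*)$. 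The exponent bookkeeping is designed so that the remaining power of $r$ is $(s-1)(\tfrac1{\widetilde p}-\tfrac1{p^*})-\eps$, equivalently $1-\tfrac1{\widetilde p}-\aleph$ with $\aleph=s(\tfrac1p-\tfrac1{\widetilde p})+\eps$, after invoking the lower mass bound \eqref{eq:mu-dimension} to convert volume ratios $\mu(B_j)/\mu(B)$ into powers of $2^{-j}$; the strict inequality defining the admissible range of $\eps$ is exactly what guarantees the exponent of $2^{-j}$ in the summable series is positive.

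The final step is linearity and boundedness of $T$ as a map $N^{1,p}(\Om)\to L^{\widetilde p}(\dOm)$: linearity is immediate from the averaging characterization of the trace, and global boundedness follows by covering $\dOm$ by finitely many balls $B(z,r)$ (possible since $\dOm$ is bounded) and summing the local estimates, absorbing the geometric constants into $C=C(\Om,C_D,p,\widetilde p,\eps,\ldots)$; one uses here that $\mu(\Om\cap B)\gtrsim\mu(B)$ and the Sobolev-type embedding \eqref{eq:SobolevEmb-Om} to bound $\|u\|_{L^1(\Om\cap B)}$ in terms of $\|u\|_{N^{1,p}(\Om)}$. For the Ahlfors-regular case one simply observes that when $\mu$ is Ahlfors $s$-regular the volume ratio $\mu(B_j)/\mu(B)$ equals $2^{-js}$ up to a constant with no error, so the trade-off of $\eps$ is unnecessary and one may take $\eps=0$ for $r<r_0$.

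I expect the main obstacle to be the exponent accounting in the second paragraph: making the series $\sum_j(2^{-j})^{\text{positive}}\|g_u\|_{L^p(\Om\cap B_j)}$ into a single $\|g_u\|_{L^p(\Om\cap B)}$ cleanly requires a careful H\"older split (separating the geometric weight from the $L^p$ masses, which overlap with bounded multiplicity across the $B_j$) and simultaneously tracking how the $P_\Om$-integration over $\dOm\cap B$ interacts with the fixed ball $B_0$ versus the shrinking balls $B_j$. One must be slightly careful that the chaining is done from a boundary point inward rather than between two interior points, so the relevant Poincar\'e estimate is applied on $\Om\cap B_j$ using \eqref{density}; everything else is routine once this is set up.
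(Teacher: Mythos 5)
Your plan is in the same spirit as the paper's proof (telescoping Poincar\'e estimates along chains of balls and a crude bound for the first average), but the way you set up the chains contains a structural gap, and the central technical tool that makes the argument work is missing.

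You set $B_j=B(z,2^{-j}r)$ with $z$ the \emph{center} of the fixed ball $B$; this only yields a pointwise bound for $|Tu(z)-u_{\Om\cap B_0}|$ at the single point $z$. To integrate $dP_\Om$ over all of $\dOm\cap B$ you need a bound for $|Tu(x)-u_{\Om\cap B}|$ valid for every $x\in\dOm\cap B$, which requires chains that run from $z$ to each such $x$ and then shrink toward $x$. The paper builds exactly this: using the length-space hypothesis, it joins $z$ to $x$ by a curve $\gamma_x$ and constructs a decreasing chain of balls along $\gamma_x$, all containing $x$ and contained in $B$. This is not a cosmetic difference: once the chain depends on $x$, each term $(\fint_{\Om\cap B_j(x)}g_u^p)^{1/p}$ is a genuine function of $x$, and turning the infinite sum into a single $\|g_u\|_{L^p(\Om\cap B)}$ cannot be done by a simple H\"older-in-$j$ split against a geometric weight, because the $L^p$ masses over $B_j(x)$ slide around $B$ as $x$ varies. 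The paper resolves this by dominating the whole chain sum by $r^\zeta M^*_{\varpi,p}g(x)$, where $M^*_{\varpi,p}$ is a restricted non-centered \emph{fractional maximal operator} acting on $L^p(\Om\cap B)$ and evaluated on $\dOm\cap B$. The heart of the argument is then the weak $L^p(\Om\cap B)\to L^{p_\varpi}(\dOm\cap B)$ bound for this operator, which is nontrivial and relies on a covering argument together with the interplay between the lower mass bound \eqref{eq:mu-dimension} and the codimension-one Ahlfors regularity \eqref{boundary-Ahlfors-regularity}. The strong $L^{\widetilde p}$ bound, and with it the $\eps$-loss, then comes from H\"older between $L^{\widetilde p}$ and weak-$L^{p_\varpi}$ with $p_\varpi>\widetilde p$; this is a different mechanism from ``trading a tiny power of $r$ to make the geometric series converge,'' and it is precisely the part you flag as ``the main obstacle'' without supplying it. Until that maximal-function step (or a substitute for it) is filled in, the proof is incomplete.
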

\begin{remark}
The requirement that $\lambda=1$ is not restrictive, since length spaces
supporting a $p$-Poincar\'e inequality will support such an inequality with
$\lambda=1$ (perhaps at the expense of a larger constant $C$), see for
example~\cite{HaK, BBbook}.
\end{remark}
\begin{proof}
Let $u \in N^{1,p}(\Om)$ and fix a ball $B=B(z,r)$ with $z\in\dOm$. If $\mu$ is
$s$-regular and $\eps=0$, let $0<\widetilde{\eps}<(s-1)(\frac{1}{\widetilde{p}}
- \frac{1}{p^*})$ be arbitrary. Otherwise, let $\widetilde{\eps} = \eps$.

For every point $x \in B \cap \dOm$, define $r_x = \frac12(r - \dd(x,z)) \le
\frac12 \dist(\{x\}, \dOm \setminus B)$. Since $\Om$ is a length space, we can
find an arc-length parametrized curve $\gamma_x: [0, l_x] \to \overline\Om$
such that $\gamma_x(0) = z$, $\gamma_x(l_x) = x$, $\gamma_x\bigl( (0, l_x)
\bigr) \subset \Om$, and $l_x \le (1+\delta) \dd(x,z)$, where the constant
$\delta=\delta_x \in (0, 1)$ is chosen such that $(1+\delta)l_x < r$.

Next, we will construct a finite decreasing sequence of balls whose centers lie
on $\gamma_x$ and all the balls contain the point $x$ and are contained in
$B(z, r)$. Let $N = N_x = \lceil \log_2 (2r/r_x) \rceil$. For each $k=0, 1,
\ldots, N$, let $r_k = (\delta^{{k+1}} + 2^{-k}) l_x$ and let $x_k =
\gamma_x\bigl((1-2^{-k}) l_x\bigr)$. Then, we define $B_k = B(x_k, r_k)$.

It follows from the triangle inequality that $B_{k+1} \subset B_k \subset
B(z,r)$, and $x \in B_k$ for all $k=0,1,\ldots N$. For $k>N$, we define $B_k =
B(x, 2^{-k} l_x) \subset B(x, r_x)$. From Defintion~\ref{defn:traces}, we see
that $Tu(x) = \lim_{k\to \infty} \fint_{B_k \cap \Om}u\,d\mu$ for
$P_\Om$-a.e~$x\in B \cap\dOm$. We can thus estimate the difference $|u_{B\cap
\Om} - Tu(x)|$ using the chain of balls $\{B_k\}_{k=0}^\infty$. For the sake of
brevity, let $\zeta = (s-1)(\frac{1}{\widetilde p} - \frac{1}{p^*}) -
\widetilde{\eps}$, which can be simplified since
\[
  \zeta = (s-1)\biggl(\frac{1}{\widetilde p} - \frac{s-p}{p (s-1)}\biggr) - \widetilde{\eps}
   = \frac{s-1}{\widetilde p} - \frac{s}{p}+1 - \widetilde{\eps} = 1- \frac{1}{\widetilde{p}} - \widetilde\aleph <1,
\]
where $\widetilde\aleph = s(1/p - 1/\widetilde p) + \widetilde{\eps} > 0$.
Then, the doubling condition and the $p$-Poincar\'e inequality yield that
\begin{align*}
  |&u_{B\cap \Om} - Tu(x)| \le |u_{B\cap\Om} - u_{{B_0}\cap\Om}| + \sum_{k=1}^\infty |u_{B_k\cap\Om} - u_{{B_{k-1}}\cap\Om}| \\
  & \le C \biggl[ \fint_{B\cap\Om} |u-u_{B\cap\Om}|\,d\mu + \sum_{k=0}^\infty \fint_{B_k\cap\Om} |u - u_{B_k\cap\Om}|\,d\mu \biggr]\\
  & \le C \biggl[ r \biggl(\fint_{ B\cap\Om} g^p \,d\mu\biggr)^{1/p}
     + \sum_{k=0}^\infty 2^{-k}r \biggl(\fint_{B_k\cap\Om} g^p \,d\mu\biggr)^{1/p} \biggr]\\
  & \le C \biggl[r^\zeta \biggl(r^{p-p\zeta} \fint_{B\cap\Om} g^p \,d\mu\biggr)^{1/p}
     + \sum_{k=0}^\infty (2^{-k}r)^\zeta \biggl((2^{-k}r)^{p-p\zeta}\fint_{B_k\cap\Om} g^p \,d\mu\biggr)^{1/p} \biggr]\\
   & \le C r^{\zeta} M^*_{p-p\zeta, p} g(x),
\end{align*}
where $M^*_{\varpi, p}$ denotes a restricted non-centered fractional maximal
operator, defined for $f \in L^p( B \cap \Om)$ by
\[
  M_{\varpi, p}^* f(x) = \sup_{\substack{x \ni B_0\\ \text{ball } B_0 \subset  B}}
     \biggl(\rad(B_0)^\varpi \fint_{B_0\cap \Om} |f|^p\,d\mu\biggr)^{1/p}, \quad x\in B \cap \dOm,
\]
where $\varpi\coloneq p-p\zeta = s - \frac{p}{\widetilde{p}}(s-1) +
\widetilde{\eps} p> 1$ as $p<\widetilde{p}$. Boundedness of the fractional
maximal operator for $\varpi>1$ can be proven via the standard $5$-covering
lemma similarly as in~\cite[Lemma~6.3]{GKS}, whose proof however needs to be
modified because of the possible lack of Ahlfors $s$-regularity. In order to
make the proof in~\cite{GKS} work (with some straightforward modifications),
one needs the following non-trivial key estimate for an arbitrary ball $D$
centered in $\overline\Om$ with $D\cap \dOm \neq \emptyset$:
\[
  P_\Om(5D \cap \dOm)  \le C  \frac{\mu(D)}{\rad(D)} \le C\biggl(\frac{\mu(D)}{\rad(D)^{\varpi}}\biggr)^{(s-1)/(s-\varpi)},
\]
where $(s-1) / (s-\varpi) > 1$. The latter inequality is equivalent to
\[
  \frac{\mu(D)^{\frac{s-1}{s-\varpi}-1}}{\rad(D)^{\varpi\frac{s-1}{s-\varpi}-1}}
  = \biggl(\frac{\mu(D)}{\rad(D)^s}\biggr)^{(\varpi-1)/(s-\varpi)}
  \ge C \biggl(\frac{\mu(\Om)}{\diam(\Om)^s}\biggr)^{(\varpi-1)/(s-\varpi)} = C,
\]
which can be obtained from \eqref{eq:mu-dimension}. Thus, $M_{\varpi, p}^*:
L^p( B \cap \Om) \to \text{weak-}L^{p_\varpi}(B \cap \dOm)$ is bounded, where
$p_\varpi = p \frac{s-1}{s-\varpi} = \frac{s-1}{((s-1)/\widetilde{p}) -
\widetilde{\eps}} > \widetilde{p}$. Then,
\begin{align}
\notag
   \| u_{B\cap \Om} - Tu\|_{L^{\widetilde{p}}(B\cap \dOm)} & \le C r^{\zeta} \|M^*_{\varpi, p}g\|_{L^{\widetilde{p}}(B\cap \dOm)} \\
\notag
   & \le C r^{\zeta} P_\Om(B\cap \dOm)^{1/\widetilde{p}-1/p_\varpi}  \|M^*_{\varpi, p}g\|_{\text{weak-}L^{p_\varpi}(B\cap \dOm)} \\
\notag
  & \le C r^{\zeta} P_\Om(B\cap \dOm)^{1/\widetilde{p}-1/p_\varpi} \|g\|_{L^{p}( B\cap \Om)}\\
\label{eq:trace-proofend}
  & \le C r^{(s-1)({1}/{\widetilde{p}}-{1}/{p^*})-\widetilde\eps} P_\Om(B\cap \dOm)^{\widetilde\eps/(s-1)} \|g\|_{L^{p}( B\cap \Om)}\\
\notag
  & \le C r^{(s-1)({1}/{\widetilde{p}}-{1}/{p^*})-\widetilde\eps} P_\Om(\dOm)^{\widetilde\eps/(s-1)} \|g\|_{L^{p}( B\cap \Om)}\\
\notag
  & \le C r^{1-1/\widetilde{p}-\widetilde\aleph} \|g\|_{L^{p}( B\cap \Om)}\,,
\end{align}
where $C$ depends among others on $\widetilde{p}$, $p^*$, $\widetilde\aleph$
(and hence on $\widetilde{\eps}>0$), and $P_\Om(\dOm)$. Finally, the triangle
inequality yields that
\begin{align*}
\|Tu\|_{L^{\widetilde{p}}(B\cap \dOm)} & \le  C r^{1-1/\widetilde{p}-\widetilde\aleph} \|g\|_{L^{p}( B\cap \Om)}
+ \| u_{B\cap \Om}\|_{L^{\widetilde{p}}(B\cap \dOm)} \\
 &\le C r^{1-1/\widetilde{p}-\widetilde\aleph} \|g\|_{L^{p}( B\cap \Om)}
 + P_\Om(B \cap \dOm)^{1/\widetilde{p}} \frac{\| u\|_{L^1(B\cap \dOm)}}{\mu(B\cap \dOm)}\,.
\end{align*}
Recall that we have chosen $\widetilde\eps = \eps$ whenever $\eps > 0$, and
hence $\widetilde\aleph = \aleph$.

Suppose now that $\eps = 0 < \widetilde{\eps}$ when $\mu$ is $s$-regular at
scale $r_0$. Then, $P_\Om$ is $(s-1)$-regular at scale $r_0$ in view
of~\eqref{boundary-Ahlfors-regularity}. If $r<r_0$, then $P_\Om(B\cap
\dOm)^{\widetilde\eps/(s-1)} \le C r^{\widetilde{\eps}}$
in~\eqref{eq:trace-proofend} above, which yields
\[
  \| u_{B\cap \Om} - Tu\|_{L^{\widetilde{p}}(B\cap \dOm)} \le C r^{(s-1)(\frac{1}{\widetilde p} - \frac{1}{p^*})} \|g\|_{L^p( B\cap \Om)}
    = C r^{1-{1}/{\widetilde p} - \aleph} \|g\|_{L^p( B\cap \Om)}.
\]
The rest of the computation is analogous as before. Here,
$\aleph=\widetilde{\aleph}-\widetilde{\eps}$.
\end{proof}
From now on, for ease of notation, the trace $Tu$ of $u$ will also be denoted
by $u$.

Throughout the paper $C$ represents various constants that depend solely on the
doubling constant, constants related to the Poincar\'e inequality, and the
constants related to~\eqref{density} and~\eqref{boundary-Ahlfors-regularity}.
The precise value of $C$ is not of interest to us at this time, and its value
may differ in each occurrence. Given expressions $a$ and $b$, we say that
$a\approx b$ if there is a constant $C\ge 1$ such that $C^{-1}a\le b\le C a$.
\section{Existence of a minimizer}
The natural space to look for a minimizer of $I$ would be $W^{1,p}(\Om)$ if we
worked in the Euclidean setting. In the metric setting, we will make use of the
Newtonian space $N^{1,p}(\Om)$ as a suitable counterpart of the Sobolev space.

Since we aim to obtain a unique representative of a solution and adding a
constant to a solution yields another solution, we will make use of the
following normalization
\[
  N^{1,p}_*(\Om) = \biggl\{u \in N^{1,p}(\Om): \int_\Om u\,dx = 0 \biggr\}.
\]
Observe that $u\equiv 0$ is a candidate for the infimum in the definition of
$I(u)$, \eqref{eq:I-functional}. Therefore,
\[
  \inf_{u \in N^{1,p}_*(\Om)} I(u) \le 0.
\]
To show existence of a minimizer, we need to prove that $I(u)$ is bounded below
for $u \in N^{1,p}_*(\Om)$ and that the functional is sequentially lower
semi-continuous.
Based on the relation between $p\in(1, \infty)$ and the ``upper measure
dimension'' $s$ given by \eqref{eq:mu-dimension}, we will a priori distinguish
two possible integrability conditions of the Neumann boundary data.
\begin{proposition}\label{prop:I(u)LowerBound}
  Let $u \in N^{1,p}_*(\Om)$ and $f \in L^q(\dOm)$, where $q=1$ if $p>s$, and
  $\tfrac{p (s-1)}{s (p-1)}<q\le \infty$ if $p \le s$.  Then,
  \[
    I(u) \ge \|g_u\|^p_{L^p(\Om)} - C\|g_u\|_{L^p(\Om)} \|f\|_{L^q(\dOm)}\,.
  \]
\end{proposition}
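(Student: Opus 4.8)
The plan is to bound the boundary term $\int_{\dOm} Tu\,f\,dP_\Om$ from below by $-C\|g_u\|_{L^p(\Om)}\|f\|_{L^q(\dOm)}$, after which the estimate $I(u)=\|g_u\|_{L^p(\Om)}^p+\int_{\dOm}Tu\,f\,dP_\Om$ follows immediately. The key point is that the normalization $\int_\Om u\,d\mu=0$ lets us control $u$ (and hence $Tu$) purely in terms of $g_u$. First, I would use Hölder's inequality on $\dOm$ with exponents $\widetilde p$ and $\widetilde p'=\widetilde p/(\widetilde p-1)$ to write
\[
  \Bigl|\int_{\dOm}Tu\,f\,dP_\Om\Bigr|\le \|Tu\|_{L^{\widetilde p}(\dOm)}\,\|f\|_{L^{\widetilde p'}(\dOm)}.
\]
For this to be useful I need to choose $\widetilde p\in(p,p^*)$ (when $p\le s$) so that the dual exponent $\widetilde p'$ is at most $q$; since $P_\Om$ is a finite measure this then gives $\|f\|_{L^{\widetilde p'}(\dOm)}\le C\|f\|_{L^q(\dOm)}$. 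The arithmetic to check: as $\widetilde p\uparrow p^*=p(s-1)/(s-p)$, the dual exponent $\widetilde p'\downarrow (p^*)'=p(s-1)/\bigl(p(s-1)-(s-p)\bigr)=p(s-1)/\bigl(s(p-1)\bigr)$, which is exactly the lower bound imposed on $q$. Hence for $q$ strictly above that threshold one can pick $\widetilde p<p^*$ with $\widetilde p'\le q$. In the case $p>s$ one takes $\widetilde p$ large (trace maps into $L^{\widetilde p}$ for all finite $\widetilde p$, or even $L^\infty$), so $\widetilde p'$ is close to $1$ and $\le q=1$ after using finiteness of $P_\Om$; either way $\|f\|_{L^{\widetilde p'}(\dOm)}\le C\|f\|_{L^q(\dOm)}$.

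Next I would bound $\|Tu\|_{L^{\widetilde p}(\dOm)}$ by $C\|g_u\|_{L^p(\Om)}$. By boundedness of the trace operator (Definition~\ref{defn:traces} / Proposition~\ref{pro:trace}), $\|Tu\|_{L^{\widetilde p}(\dOm)}\le C\|u\|_{N^{1,p}(\Om)}=C\bigl(\|u\|_{L^p(\Om)}+\|g_u\|_{L^p(\Om)}\bigr)$. Now invoke the Sobolev--Poincaré inequality \eqref{eq:SobolevEmb-Om}: since $\int_\Om u\,d\mu=0$ we have $u_\Om=0$, so $\|u\|_{L^p(\Om)}=\|u-u_\Om\|_{L^p(\Om)}\le C\|g_u\|_{L^p(\Om)}$. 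Combining, $\|Tu\|_{L^{\widetilde p}(\dOm)}\le C\|g_u\|_{L^p(\Om)}$, and then
\[
  \int_{\dOm}Tu\,f\,dP_\Om\ge -\|Tu\|_{L^{\widetilde p}(\dOm)}\|f\|_{L^{\widetilde p'}(\dOm)}\ge -C\|g_u\|_{L^p(\Om)}\|f\|_{L^q(\dOm)},
\]
which gives the claimed inequality $I(u)\ge\|g_u\|_{L^p(\Om)}^p-C\|g_u\|_{L^p(\Om)}\|f\|_{L^q(\dOm)}$.

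The only genuinely delicate step is the bookkeeping on exponents in the case $p\le s$: one must verify that the open condition $q>p(s-1)/\bigl(s(p-1)\bigr)$ is precisely what is needed to squeeze an admissible $\widetilde p\in(p,p^*)$ with $\widetilde p'\le q$ through, and that the borderline $q=\infty$ poses no problem (take any $\widetilde p$, use $\|f\|_{L^{\widetilde p'}(\dOm)}\le P_\Om(\dOm)^{1/\widetilde p'}\|f\|_{L^\infty(\dOm)}$). The endpoint $\widetilde p=p$ is not allowed in Proposition~\ref{pro:trace}, but we never need it: since $q$ is strictly above the threshold, a strictly interior $\widetilde p$ works. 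Everything else — Hölder, the trace bound, and \eqref{eq:SobolevEmb-Om} — is routine, and the constant $C$ depends only on the structural constants of $\Om$ and $p$, $q$, as required.
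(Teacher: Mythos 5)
Your proposal follows essentially the same route as the paper: H\"older's inequality on the boundary term, then a bound of the form $\|Tu\|_{L^{q'}(\dOm)}\le C\|g_u\|_{L^p(\Om)}$ valid under the normalization $\int_\Om u\,d\mu=0$. The paper cites this last estimate directly from the trace theorem (Proposition~3.20 of~\cite{MalTr}), while you reconstruct it from the boundedness of $T:N^{1,p}(\Om)\to L^{\widetilde p}(\dOm)$ together with the Sobolev--Poincar\'e inequality~\eqref{eq:SobolevEmb-Om}; that is a reasonable, self-contained alternative and your exponent arithmetic $(p^*)'=p(s-1)/(s(p-1))$ matches the hypothesis on $q$ exactly.

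One small slip, in the case $p>s$: you write that one may take $\widetilde p$ ``large'' so that $\widetilde p'$ is close to $1$ and conclude $\|f\|_{L^{\widetilde p'}(\dOm)}\le C\|f\|_{L^1(\dOm)}$ ``after using finiteness of $P_\Om$.'' That direction of the inequality is false on a finite measure space --- finiteness of $P_\Om$ gives $\|f\|_{L^r}\le C\|f\|_{L^{r'}}$ only for $r\le r'$, so a finite $\widetilde p'>1$ cannot be dominated by the $L^1$ norm. You must take $\widetilde p=\infty$ exactly (which the cited trace theorem permits when $p>s$, so $Tu\in L^\infty(\dOm)$ with $\|Tu\|_{L^\infty(\dOm)}\le C\|g_u\|_{L^p(\Om)}$), giving $\widetilde p'=1=q$. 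You already mention the $L^\infty$ option parenthetically, so this is a matter of presentation rather than a gap, but the ``either way'' phrasing should be dropped.
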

\begin{proof}
The H\"older inequality yields that
  \[
  I(u) \ge \int_{\Om} |g_u|^p \,d\mu - \int_{\dOm} |uf|\,dP_\Om
  \ge \|g_u\|_{L^p(\Om)}^p - \|u\|_{L^{q'}(\dOm)} \|f\|_{L^q(\dOm)}\,.
  \]
It follows from the (proof of the) trace theorem for $N^{1,p}$ functions in
$p$-Poincar\'e spaces~\cite[Proposition~3.20]{MalTr} that $\|u\|_{L^{q'}(\dOm)}
\le C \|g_u\|_{L^p(\Om)}$ provided that $\int_\Om u\,d\mu = 0$.  Thus,
  \[
  I(u) \ge \|g_u\|_{L^p(\Om)} \bigl(\|g_u\|_{L^p(\Om)}^{p-1} - C \|f\|_{L^q(\dOm)}\bigr)\,.
  \qedhere
  \]
\end{proof}
Note that functions that are bounded and $P_\Om$-measurable on $\dOm$ are
automatically in $L^q(\dOm)$.
\begin{corollary}
\label{cor:IlowerBound}
There is a constant $C>0$, depending on $p$, $q$, and on the norm of the trace
operator $T: N^{1,p}(\Om) \to L^{q'}(\dOm)$ such that
\[
  I(u) \ge -C \|f\|_{L^q(\dOm)}^{p'}
\]
for every $u \in N^{1,p}_*(\Om)$.
\end{corollary}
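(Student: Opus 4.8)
The plan is to optimize the lower bound from Proposition~\ref{prop:I(u)LowerBound} over the one free quantity $t \coloneq \|g_u\|_{L^p(\Om)} \ge 0$. Indeed, for every $u \in N^{1,p}_*(\Om)$ the proposition gives
\[
  I(u) \ge t^p - C t \|f\|_{L^q(\dOm)} = \varphi(t),
\]
where $\varphi(t) = t^p - Ct\|f\|_{L^q(\dOm)}$ with $p > 1$ and $C$ the constant from that proposition. So it suffices to bound $\varphi$ from below on $[0,\infty)$ by a quantity of the form $-C'\|f\|_{L^q(\dOm)}^{p'}$; then taking $C$ in the statement to be $C'$ finishes the proof, and the claimed dependence of $C'$ on $p$, $q$, and the norm of the trace operator $T\colon N^{1,p}(\Om)\to L^{q'}(\dOm)$ follows from tracking the constant through Proposition~\ref{prop:I(u)LowerBound}.

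The key step is the elementary one-variable minimization. Writing $a = C\|f\|_{L^q(\dOm)}$, I would minimize $\varphi(t) = t^p - at$ over $t \ge 0$: if $a = 0$ then $\varphi \ge 0$ and there is nothing to prove, while if $a > 0$ then $\varphi'(t) = pt^{p-1} - a = 0$ at $t_* = (a/p)^{1/(p-1)}$, and substituting gives
\[
  \varphi(t_*) = \Bigl(\frac{a}{p}\Bigr)^{p/(p-1)} - a\Bigl(\frac{a}{p}\Bigr)^{1/(p-1)}
  = -a^{p/(p-1)}\,p^{-1/(p-1)}\Bigl(1 - \tfrac1p\Bigr)
  = -\frac{p-1}{p^{p/(p-1)}}\,a^{p'},
\]
using $p' = p/(p-1)$. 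Hence $\varphi(t) \ge -c_p\, a^{p'} = -c_p C^{p'}\|f\|_{L^q(\dOm)}^{p'}$ for all $t \ge 0$, with $c_p = (p-1)p^{-p/(p-1)}$, and we may set $C' = c_p C^{p'}$.

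There is essentially no obstacle here; the result is a routine corollary. The only points requiring a word of care are: checking that the constant $C$ appearing in Proposition~\ref{prop:I(u)LowerBound} is precisely (a multiple of) the norm of the trace operator $T\colon N^{1,p}(\Om)\to L^{q'}(\dOm)$ restricted to mean-zero functions — which is exactly what the cited estimate $\|u\|_{L^{q'}(\dOm)} \le C\|g_u\|_{L^p(\Om)}$ from~\cite[Proposition~3.20]{MalTr} provides — so that the final constant $C' = c_p C^{p'}$ has the dependence asserted in the corollary; and noting that the estimate is vacuously true (indeed $I(u) \ge 0$) when $f = 0$ $P_\Om$-a.e. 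Since $f$ is bounded and $P_\Om$-measurable, $f \in L^q(\dOm)$ automatically, so the hypothesis of Proposition~\ref{prop:I(u)LowerBound} is met and the corollary applies to the functions relevant in the existence proof.
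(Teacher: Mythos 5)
Your proof is correct and is essentially the same as the paper's, which simply remarks that one finds the absolute minimum of $t \mapsto t^p - Ct\|f\|_{L^q(\dOm)}$ on $[0,\infty)$; you carry out that calculus-of-one-variable minimization explicitly and correctly, obtaining $\varphi(t) \ge -(p-1)p^{-p/(p-1)}\,C^{p'}\|f\|_{L^q(\dOm)}^{p'}$, and you also correctly trace the dependence of the constant on the norm of the trace operator through Proposition~\ref{prop:I(u)LowerBound}.
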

\begin{proof}
The estimate can be shown by finding the absolute minimum of the function $t
\mapsto t^p - Ct \|f\|_{L^q(\dOm)}$, where $t\ge 0$.
\end{proof}
\begin{theorem}\label{thm:exists}
There is $u \in N^{1,p}_*(\Om)$ such that $I=I(u)$.
\end{theorem}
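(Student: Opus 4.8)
The plan is to use the direct method of the calculus of variations. By Corollary~\ref{cor:IlowerBound}, the functional $I$ is bounded below on $N^{1,p}_*(\Om)$, so we may fix a minimizing sequence $(u_j)_j\subset N^{1,p}_*(\Om)$ with $I(u_j)\to\inf I=:m\le 0$. First I would extract uniform bounds: since $I(u_j)\ge\|g_{u_j}\|_{L^p(\Om)}\bigl(\|g_{u_j}\|_{L^p(\Om)}^{p-1}-C\|f\|_{L^q(\dOm)}\bigr)$ from Proposition~\ref{prop:I(u)LowerBound}, the sequence $\|g_{u_j}\|_{L^p(\Om)}$ must be bounded (otherwise $I(u_j)\to+\infty$). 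Combined with the Sobolev--Poincar\'e inequality \eqref{eq:SobolevEmb-Om} and the normalization $\int_\Om u_j\,d\mu=0$, this gives a uniform bound on $\|u_j\|_{N^{1,p}(\Om)}$.

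Next I would pass to a weak limit. Using reflexivity of $L^p$ (recall $1<p<\infty$) and the boundedness of $(u_j)$ in $N^{1,p}(\Om)$, after passing to a subsequence there is $u\in N^{1,p}(\Om)$ such that $u_j\to u$ weakly in $N^{1,p}(\Om)$; standard arguments (see e.g.\@ \cite{BBbook}) also give $u_j\to u$ in $L^p(\Om)$ and pointwise $\mu$-a.e., so that $\int_\Om u\,d\mu=0$, i.e.\@ $u\in N^{1,p}_*(\Om)$. The energy term $v\mapsto\int_\Om g_v^p\,d\mu$ is sequentially weakly lower semicontinuous on $N^{1,p}(\Om)$ (convexity of $t\mapsto t^p$ together with the lower semicontinuity properties of minimal upper gradients under $L^p$-convergence), so $\int_\Om g_u^p\,d\mu\le\liminf_j\int_\Om g_{u_j}^p\,d\mu$. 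For the boundary term I would invoke that the trace operator $T:N^{1,p}(\Om)\to L^{q'}(\dOm)$ is bounded and linear, hence weakly continuous; thus $Tu_j\to Tu$ weakly in $L^{q'}(\dOm)$, and since $f\in L^q(\dOm)$ pairs continuously with $L^{q'}(\dOm)$, we get $\int_{\dOm}Tu_j\,f\,dP_\Om\to\int_{\dOm}Tu\,f\,dP_\Om$.

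Putting these together,
\[
  I(u)=\int_\Om g_u^p\,d\mu+\int_{\dOm}Tu\,f\,dP_\Om
  \le\liminf_{j\to\infty}\Bigl(\int_\Om g_{u_j}^p\,d\mu+\int_{\dOm}Tu_j\,f\,dP_\Om\Bigr)
  =\liminf_{j\to\infty}I(u_j)=m,
\]
and since $u\in N^{1,p}_*(\Om)$ we also have $I(u)\ge m$, so $I(u)=m=\inf_{N^{1,p}_*(\Om)}I$. Finally, because adding a constant does not change $I$ (by the compatibility condition $\int_{\dOm}f\,dP_\Om=0$), this $u$ minimizes $I$ over all of $N^{1,p}(\Om)$, which is exactly the assertion $I=I(u)$ in the sense of Definition~\ref{defn:problem}.

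The main obstacle is the boundary term: one must ensure that weak convergence in $N^{1,p}(\Om)$ is strong enough to force convergence (not merely lower semicontinuity) of $\int_{\dOm}Tu_j\,f\,dP_\Om$. If the trace operator were only bounded into $L^{q'}$ without compactness, weak-$L^{q'}$ convergence of the traces suffices to pair against $f\in L^q$; but one has to be careful that the weak $N^{1,p}$-limit $u$ really has trace equal to the weak $L^{q'}$-limit of $Tu_j$ — this follows from linearity and boundedness of $T$ (a bounded linear operator between Banach spaces is weak-to-weak continuous). A secondary technical point is justifying that the minimal $p$-weak upper gradient is weakly lower semicontinuous, which is where one uses that $u_j\to u$ in $L^p(\Om)$ together with the standard Mazur-lemma / convexity argument. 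Neither difficulty is serious given the tools already cited in the paper.
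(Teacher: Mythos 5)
Your proof is correct and follows the same direct method the paper uses: bound the minimizing sequence via Proposition~\ref{prop:I(u)LowerBound} and the Sobolev inequality \eqref{eq:SobolevEmb-Om}, pass to a weak limit, use weak lower semicontinuity of the $p$-energy, and handle the linear boundary term through the bounded trace operator $T$.

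The one place you and the paper diverge is in how the weak limit is extracted. You invoke weak sequential compactness of bounded sets in $N^{1,p}(\Om)$ (equivalently, reflexivity of the Newtonian space), plus a Rellich--Kondrachov-type statement to get strong $L^p$ and pointwise a.e.\@ convergence. Reflexivity of $N^{1,p}$ does hold under the standing hypotheses (complete, doubling, $p$-Poincar\'e), but it is a genuinely nontrivial theorem. The paper sidesteps it: it uses only the reflexivity of $L^p$ to extract weak $L^p$ limits of $u_k$ and of the minimal upper gradients $g_k$ separately, and then applies Mazur's lemma to obtain convex combinations $\widetilde{u}_k \to u$, $\widetilde{g}_k \to g$ that converge \emph{strongly} in $L^p$; the results in \cite[Prop.~2.3, Cor.~6.3]{BBbook} then give that $u \in N^{1,p}(\Om)$ with the required lower semicontinuity, and linearity of $T$ plus convexity of the energy take care of the rest. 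The Mazur route has the pedagogical advantage of not presupposing reflexivity of the Newtonian space; your route is more concise once one is willing to cite it. Your observation that weak-to-weak continuity of the bounded linear operator $T$ guarantees that the trace of the weak limit is the weak limit of the traces is exactly the right justification for the boundary term, and closes the only gap one might worry about.
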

\begin{proof}
Let $I = \inf_{u\in N^{1,p}_*(\Om)} I(u)$ and let $\{u_k\}_{k=1}^\infty \subset
N^{1,p}_*(\Om)$ be a minimizing sequence, i.e., $I = \lim_{k\to\infty} I(u_k)$.
Let $g_k$ denote the $p$-weak minimal upper gradients of $u_k$, $k=1,2,\ldots$.
Using Proposition~\ref{prop:I(u)LowerBound}, we see that $I(v) \le 0$ requires
that $\|g_v\|_{L^p(\Om)} \le C^{1/(p-1)}\Vert f\Vert_{L^q(\dOm)}^{1/(p-1)}$.
Hence, the sequence $\{g_k\}_{k=1}^\infty$ is bounded in $L^p(\Om)$. Using
\eqref{eq:SobolevEmb-Om}, we obtain that $\{u_k\}_{k=1}^\infty$ is also bounded
in $L^p(\Om)$ since $(u_k)_\Om = 0$ by definition of $N^{1,p}_*(\Om)$. The
reflexivity of $L^p(\Om)$ yields that there are subsequences (which will also
be denoted by $\{u_k\}_{k=1}^\infty$ and $\{g_k\}_{k=1}^\infty$) and $u,g \in
L^p(\Om)$ such that $u_k \rightharpoonup u$ and $g_k \rightharpoonup g$ as
$k\to \infty$.

By Mazur's lemma, there are convex combinations
\[
  \widetilde{u}_k = \sum_{i=k}^{N(k)} \alpha_{k,i} u_i\quad\text{and}\quad
  \widetilde{g}_k = \sum_{i=k}^{N(k)} \alpha_{k,i} g_i,\quad k=1,2,\ldots,
\]
such that $\widetilde{u}_k \to u$ and $\widetilde{g}_k \to g$ in $L^p(\Om)$.
Observe that $\widetilde{g}_k$ are $p$-weak upper gradients of
$\widetilde{u}_k$ (not necessarily minimal, though). By
\cite[Proposition~2.3]{BBbook}, we can modify $u$ on a set of measure zero to
obtain a good representative such that $g$ is its $p$-weak upper gradient. In
what follows, we will consider $u$ to be such a good representative and hence
$u\in N^{1,p}(\Om)$. Applying \cite[Proposition~2.3 and Corollary~6.3]{BBbook}
and passing to a subsequence if necessary, we obtain that
\[
  \int_\Om g_u^p\,d\mu \le \liminf_{k\to\infty} \int_\Om g_{\widetilde{u}_k}^p\,d\mu,
\]
where $g_u$ and $g_{\widetilde{u}_k}$ are the minimal $p$-weak upper gradients
of $u$ and $\widetilde{u}_k$, respectively.

Since $\int_\Om u_k = 0$ for every $k=1,2,\ldots$ and $u_k \rightharpoonup u$,
we have that $\int_\Om u = 0$. Hence, $u\in N^{1,p}_*(\Om)$.

Considering that the trace operator $T: N^{1,p}(\Om) \to L^{q'}(\dOm)$ is
linear and the energy functional $v \mapsto \int_{\Om} g_v^p\,d\mu$ is convex,
we see that
\[
  I \le I(\widetilde{u}_k) = I\biggl(\sum_{i=k}^{N(k)} \alpha_{k,i} u_i\biggr)
  \le \sum_{i=k}^{N(k)} \alpha_{k,i} I(u_i) \to I\quad\text{as }k\to\infty.
\]
The continuity of the trace operator yields that
\begin{align*}
  I & \le I(u) = \int_\Om g_u^p\,d\mu + \int_{\dOm} uf\,dP_\Om \\
  & \le \liminf_{k\to \infty} \biggl(\int_\Om g_{\widetilde{u}_k}^p\,d\mu +
  \int_{\dOm} \widetilde{u}_k f\,dP_\Om \biggr)=\liminf_{k\to \infty} I(\widetilde{u}_k) = I.
\qedhere
\end{align*}
\end{proof}
\begin{lemma}
The set $M_I = \{u \in N^{1,p}_*(\Om) : I(u) = I\}$ of minimizers of $I(\cdot)$
is norm-closed and convex.
\end{lemma}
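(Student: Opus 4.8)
The plan is to verify the two asserted properties of $M_I$ directly from the structure of the functional $I$ and the linearity of the trace operator.

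\textit{Convexity.} Given $u, v \in M_I$ and $t \in [0,1]$, set $w = tu + (1-t)v$. Since $\int_\Om u\,d\mu = \int_\Om v\,d\mu = 0$, linearity of the integral gives $\int_\Om w\,d\mu = 0$, so $w \in N^{1,p}_*(\Om)$. By linearity of the trace operator $T$, we have $Tw = tTu + (1-t)Tv$, and hence $\int_{\dOm} Tw\, f\,dP_\Om = t\int_{\dOm} Tu\, f\,dP_\Om + (1-t)\int_{\dOm} Tv\, f\,dP_\Om$. For the upper gradient term, the map $v \mapsto \int_\Om g_v^p\,d\mu$ is convex on $N^{1,p}(\Om)$ (this was already used in the proof of Theorem~\ref{thm:exists}): indeed $tg_u + (1-t)g_v$ is a $p$-weak upper gradient of $w$, so $g_w \le tg_u + (1-t)g_v$ a.e., and then by convexity of $\tau \mapsto \tau^p$ on $[0,\infty)$,
\[
  \int_\Om g_w^p\,d\mu \le \int_\Om \bigl(tg_u + (1-t)g_v\bigr)^p\,d\mu \le t\int_\Om g_u^p\,d\mu + (1-t)\int_\Om g_v^p\,d\mu.
\]
Adding the two contributions yields $I(w) \le tI(u) + (1-t)I(v) = I$. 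Since $I$ is the infimum of $I(\cdot)$ over $N^{1,p}_*(\Om)$, we also have $I(w) \ge I$, so $I(w) = I$ and $w \in M_I$.

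\textit{Norm-closedness.} Let $\{w_k\} \subset M_I$ with $w_k \to w$ in $N^{1,p}(\Om)$. The condition $\int_\Om w_k\,d\mu = 0$ passes to the limit because convergence in $N^{1,p}(\Om)$ implies convergence in $L^p(\Om)$, hence (as $\Om$ is bounded) in $L^1(\Om)$; thus $\int_\Om w\,d\mu = 0$ and $w \in N^{1,p}_*(\Om)$. Next, convergence in $N^{1,p}(\Om)$ gives $g_{w_k - w} \to 0$ in $L^p(\Om)$, and since $g_{w_k} \le g_w + g_{w_k - w}$ and $g_w \le g_{w_k} + g_{w_k - w}$ pointwise a.e., one gets $\|g_{w_k}\|_{L^p(\Om)} \to \|g_w\|_{L^p(\Om)}$, hence $\int_\Om g_{w_k}^p\,d\mu \to \int_\Om g_w^p\,d\mu$. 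The boundary term converges because the trace operator $T : N^{1,p}(\Om) \to L^{q'}(\dOm)$ is bounded (Definition~\ref{defn:traces} together with Proposition~\ref{pro:trace}), so $Tw_k \to Tw$ in $L^{q'}(\dOm)$, and $f \in L^q(\dOm)$ with $1/q + 1/q' = 1$ gives $\int_{\dOm} Tw_k\,f\,dP_\Om \to \int_{\dOm} Tw\,f\,dP_\Om$ by H\"older's inequality. Therefore $I(w) = \lim_{k\to\infty} I(w_k) = I$, so $w \in M_I$.

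The only point requiring a little care is the continuity of $u \mapsto \int_\Om g_u^p\,d\mu$ with respect to the $N^{1,p}$-norm; but this follows at once from the sublinearity estimate $|g_{w_k} - g_w| \le g_{w_k - w}$ a.e.\ for minimal $p$-weak upper gradients, which is standard (see \cite{BBbook}), combined with the reverse triangle inequality in $L^p(\Om)$. Everything else is a direct consequence of linearity of $T$, convexity of $\tau \mapsto \tau^p$, and the definition of $I$ as an infimum.
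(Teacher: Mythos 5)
Your proof is correct, and the convexity half is essentially identical to the paper's (use convexity of $\tau\mapsto\tau^p$ together with $g_w\le tg_u+(1-t)g_v$ and linearity of the trace term, then sandwich against the infimum). For closedness the paper simply cites sequential lower semicontinuity of $I$ (which was effectively established, in its weak form, in the existence proof), whereas you take the slightly different and arguably cleaner route of proving that $I$ is norm-\emph{continuous} on $N^{1,p}_*(\Om)$: the boundary term is continuous by linearity and boundedness of $T$, and the energy term is continuous via the pointwise sublinearity $|g_{w_k}-g_w|\le g_{w_k-w}$ of minimal $p$-weak upper gradients. This buys you a self-contained argument that does not depend on recalling what flavor of semicontinuity was used in Theorem~\ref{thm:exists}; the paper's citation is shorter but relies on the reader noticing that weak lower semicontinuity implies norm lower semicontinuity. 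Both are valid, and no gaps.
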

\begin{proof}
Let $\lambda \in (0,1)$ and let $u,v \in M_I$, then $w=\lambda u +
(1-\lambda)v$ satisfies
\[
  I(w) = I(\lambda u + (1-\lambda) v) \le \lambda I(u) + (1-\lambda) I(v) = I
\]
due to convexity of the functional $I(\cdot)$. Therefore, $w\in M_I$.

The set $M_I$ is closed due to sequential lower semi-continuity of $I(\cdot)$.
\end{proof}
\begin{lemma}\label{lem:almost-unique}
Suppose that $u,v \in M_I$. Then $\int_{\dOm} uf\,dP_\Om = \int_{\dOm}
vf\,dP_\Om$ and $g_u = g_v$ a.e.\@ in $\Om$.  Furthermore, if $u, v\in M_I$
then the functions $w_+, w_-$ given by
\[
w_+\coloneq\max\{u,v\}-\fint_\Om\max\{u,v\}\, d\mu
\]
and
\[
w_-\coloneq\min\{u,v\}-\fint_\Om\min\{u,v\}\, d\mu
\]
also belong to $M_I$.
\end{lemma}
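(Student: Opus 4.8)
The plan is to exploit two facts about the energy functional $v\mapsto\int_\Om g_v^p\,d\mu$: it is convex on $N^{1,p}(\Om)$, and for $a,b\in\RR$ and $u,v\in N^{1,p}(\Om)$ one has the pointwise a.e.\@ identity
\[
 g_{\max\{u,v\}}^p+g_{\min\{u,v\}}^p = g_u^p+g_v^p \quad\text{a.e.\@ in }\Om,
\]
which comes from the locality of minimal weak upper gradients together with $g_{\max\{u,v\}}=g_u$ a.e.\@ on $\{u\ge v\}$ and $g_{\max\{u,v\}}=g_v$ a.e.\@ on $\{u<v\}$, and symmetrically for the minimum. Integrating gives
\[
 \int_\Om g_{\max\{u,v\}}^p\,d\mu + \int_\Om g_{\min\{u,v\}}^p\,d\mu
 = \int_\Om g_u^p\,d\mu + \int_\Om g_v^p\,d\mu.
\]

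First I would establish $g_u=g_v$ a.e.\@ and $\int_{\dOm}uf\,dP_\Om=\int_{\dOm}vf\,dP_\Om$ for $u,v\in M_I$. Since $I(u)=I(v)=I$, strict convexity of $t\mapsto t^p$ on $[0,\infty)$ for $p\in(1,\infty)$ forces, from $I\bigl(\tfrac{u+v}{2}\bigr)\le\tfrac12 I(u)+\tfrac12 I(v)=I$ and the linearity of the trace term, that
\[
 \int_\Om g_{(u+v)/2}^p\,d\mu = \tfrac12\int_\Om g_u^p\,d\mu+\tfrac12\int_\Om g_v^p\,d\mu.
\]
Because $g_{(u+v)/2}\le\tfrac12(g_u+g_v)$ a.e.\@ and $t\mapsto t^p$ is strictly convex, this equality can hold only if $g_u=g_v$ a.e.\@ in $\Om$; then comparing $I(u)$ with $I(v)$ gives equality of the two boundary integrals as well. (Equivalently, one can argue via Clarkson-type uniform convexity of $L^p(\Om)$ applied to $g_u$ and $g_v$.)

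Next I would handle $w_+$ and $w_-$. Note $w_+$ and $w_-$ lie in $N^{1,p}_*(\Om)$ by construction, and subtracting a constant does not change upper gradients or the value of $I$ beyond the constant's contribution to the trace term; since $\int_{\dOm}f\,dP_\Om=0$, subtracting the mean contributes nothing to $\int_{\dOm}(\cdot)f\,dP_\Om$. Also $T$ commutes with $\max$ and $\min$ $\mathcal H$-a.e.\@ (from the defining Lebesgue-point property of the trace in Definition~\ref{defn:traces}, using that $T\max\{u,v\}=\max\{Tu,Tv\}$ pointwise where both traces exist), so $T w_+ + T w_- = Tu+Tv + (\text{constants})$ and the boundary energy splits:
\[
 \int_{\dOm} Tw_+\,f\,dP_\Om + \int_{\dOm} Tw_-\,f\,dP_\Om
 = \int_{\dOm} Tu\,f\,dP_\Om + \int_{\dOm} Tv\,f\,dP_\Om.
\]
Combining this with the Dirichlet-energy splitting displayed above yields $I(w_+)+I(w_-)=I(u)+I(v)=2I$. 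Since $w_\pm\in N^{1,p}_*(\Om)$ are admissible competitors, $I(w_+)\ge I$ and $I(w_-)\ge I$; hence both must equal $I$, i.e.\@ $w_+,w_-\in M_I$.

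The main obstacle is the verification that minimal $p$-weak upper gradients genuinely behave additively under $\max/\min$ at the level of the $p$-th powers — that is, justifying $g_{\max\{u,v\}}=g_u\chi_{\{u\ge v\}}+g_v\chi_{\{u<v\}}$ a.e.\@ (and likewise for the minimum), which relies on the locality property of minimal weak upper gradients on measurable sets (standard, e.g.\@ from \cite{BBbook}) and on the fact that $g_u=g_v$ a.e.\@ on $\{u=v\}$ when $u,v\in N^{1,p}(\Om)$. A secondary technical point is the commutation of the trace with lattice operations $\mathcal H$-a.e., which I would read off from the Lebesgue-point characterization in Definition~\ref{defn:traces}. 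Everything else is bookkeeping with constants and the compatibility condition on $f$.
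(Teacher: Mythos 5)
Your proof is correct, and for the lattice part it takes a genuinely different route from the paper. For the statement $g_u=g_v$ a.e.\@ you and the paper are essentially doing the same thing: the paper makes the strict convexity of $t\mapsto t^p$ quantitative (introducing the set $A_\delta$ and a uniform-convexity constant $\eps$) to reach a contradiction, whereas you go through the chain of equalities $I=I((u+v)/2)=\tfrac12 I(u)+\tfrac12 I(v)$ and invoke strict convexity of the integrand to force $g_u=g_v$ a.e.; both are sound, the paper's being slightly more explicit. For the claim about $w_\pm$, the paper's proof crucially uses the already-established fact $g_u=g_v$ a.e.\@ to get $g_{\max\{u,v\}}\le g_u$ and $g_{\min\{u,v\}}\le g_u$, and then closes the argument by the observation that $\int_{\dOm}(\max\{u,v\}+\min\{u,v\})f\,dP_\Om=2\int_{\dOm}uf\,dP_\Om$ forces both boundary terms to be $\le\int_{\dOm}uf\,dP_\Om$, else one of $I(w_\pm^0)$ would dip below $I$. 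Your argument instead exploits the exact lattice identity $g_{\max\{u,v\}}^p+g_{\min\{u,v\}}^p=g_u^p+g_v^p$ a.e.\@ (which relies on locality of minimal $p$-weak upper gradients, including $g_u=g_v$ a.e.\@ on $\{u=v\}$, not on the first part of the lemma), together with the linear splitting of the boundary term via $\max\{u,v\}+\min\{u,v\}=u+v$, to get $I(w_+)+I(w_-)=2I$ and hence both equal to $I$. This makes the $w_\pm$ assertion logically independent of the $g_u=g_v$ conclusion, which is a mild structural improvement. One small simplification available to you: rather than arguing that $T$ commutes with $\max$ and $\min$ $\mathcal H$-a.e., it suffices to apply the linearity of $T$ to $\max\{u,v\}+\min\{u,v\}=u+v$, as the paper does.
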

\begin{proof}
For any $u$ and $v$ as in the hypothesis, set $w=\tfrac{u+v}{2}$. Then $g_w\le
\tfrac12[g_u+g_v]$.

By the uniform convexity of $t\mapsto t^p$ on $[0,\infty)$, we know that for
each $\delta>0$ there exists a positive constant $\eps =
\delta^p(2^{-1}-2^{-p})$ such that
\[
\left(\frac{a+b}{2}\right)^p\le \frac{a^p+b^p}{2}-\eps
\]
whenever $a,b\in[0,\infty)$ with $|a-b|\ge\delta$.

Suppose that $\{x\in\Om\, :\, g_v(x)\ne g_u(x)\}$ has positive measure. Then
there is some $\delta>0$ such that the measure of the set
\[
A_\delta\coloneq \{x\in\Om\, :\, |g_v(x)-g_u(x)|>\delta\}
\]
is positive. Then
\begin{align*}
I(u)=I(v)\le I(w)&\le \int_\Om \left(\frac{g_u+g_v}{2}\right)^p\, d\mu+\int_{\dOm}wf\, dP_\Om\\
  &\le \int_{A_\delta}\left[\frac{g_u^p+g_v^p}{2}-\eps\right]d\mu
	  +\int_{\Om\setminus A_\delta}\frac{g_u^p+g_v^p}{2}\, d\mu
	+ \int_{\dOm}wf\, dP_\Om\\
	&=\int_\Om \frac{g_u^p+g_v^p}{2}\, d\mu+\int_{\dOm}\frac{u+v}{2}f\, dP_\Om-\eps\, \mu(A_\delta)\\
	&\le I(u)-\eps\, \mu(A_\delta),
\end{align*}
which is not possible. Therefore $g_u=g_v$ $\mu$-a.e. in $\Om$, and hence it
also follows from $I(u)=I(v)$ that $\int_{\dOm}uf\, dP_\Om=\int_{\dOm}vf\,
dP_\Om$.

To prove the last part of the lemma, it suffices to show that
$w_+^0=\max\{u,v\}$ and $w_-^0=\min\{u,v\}$ are minimizers of the functional
$I$ corresponding to $f$. Note that $g_{w_-^0}\le
g_u\chi_{\{u<v\}}+g_v\chi_{\{u\ge v\}}=g_u$ and similarly $g_{w_+^0}\le g_u$.
Therefore
\[
I(w_{\pm}^0)\le \int_\Om g_u^p\, d\mu+\int_{\dOm}w_{\pm}^0\, f\, dP_\Om.
\]
Note that
\[
\int_{\dOm} [w_+^0+w_-^0]f\, dP_\Om=\int_{\dOm}[u+v]f\, dP_\Om
 =2\int_{\dOm}uf\, dP_\Om.
\]
It follows that if $\int_{\dOm} w_+^0 f\, dP_\Om>\int_{\dOm} u f\, dP_\Om$,
then $\int_{\dOm}w_-^0f\, dP_\Om<\int_ {\dOm}u f\, dP_\Om$, which would violate
the minimality of $I(u)$. Therefore we must have $\int_{\dOm} w_+^0 f\,
dP_\Om\le\int_{\dOm} u f\, dP_\Om$ and similarly, $\int_{\dOm} w_-^0 f\,
dP_\Om\le\int_{\dOm} u f\, dP_\Om$, which in turn implies that $I(w_{\pm}^0)\le
I(u)$, as desired.
\end{proof}
Observe that in infinitesimally Hilbertian spaces, the above uniqueness of the
minimal $p$-weak upper gradient together with convexity of the set $M_I$ imply
that the solution of the Neumann problem is in fact unique (up to an additive
constant).
\section{Boundedness of solutions, at the boundary}
We will use the De Giorgi method to prove that the minimizers are bounded near
the boundary of $\Om$. Local boundedness inside $\Om$ follows from previously
known results on $p$-energy minimizers in the metric setting \cite{KS}.

Let $u \in N^{1,p}(\Om)$ be a minimizer of
\begin{equation}
\label{eq:DeGiorgiFcnal}
  I(u) = \int_\Om g_u^p\,d\mu + \int_{\dOm} f u\,dP_\Om,
\end{equation}
where $f \in L^\infty(\dOm)$ is a Borel function.  The main goal of this
section is to prove that solutions are bounded whenever the boundary data $f$
is bounded.
\begin{theorem}\label{thm:bounded}
Let $\Om$ be a bounded domain in $X$ satisfying the assumptions given in
Definition~\ref{Assume-Om}, and let $f$ and $u$ be as above. Fix $R_0 \in (0,
\diam \Om)$. Then for each $x\in\dOm$ and $0<R<R_0/4$ we have that $|u|\le C_R$
on $\overline{\Om}\cap B(x,R)$, where $C_R$ depends on the doubling and
Poincar\'e inequality constants, $p$, $R$, $\|u\|_{L^1(B(x,R)\cap\dOm)}$,
$\|u\|_{L^p(B(x,R)\cap\Om)}$, and on $\Vert f\Vert_{L^\infty(\dOm\cap
B(x,2R))}$ alone.
\end{theorem}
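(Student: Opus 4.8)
The plan is to run a De Giorgi iteration adapted to the boundary Neumann problem. Fix $x\in\dOm$, $R_0$, and $0<R<R_0/4$. For $k\ge 0$ let $r_k = R(1+2^{-k})$, so $r_k\downarrow R$ and $r_k\le 2R$, and let $B_k = B(x,r_k)$. For levels $\ell_k = L(1-2^{-k})$ (with $L>0$ to be fixed large at the end), set $u_k = (u-\ell_k)_+$ and $A_k = \{u>\ell_k\}\cap B_k\cap\Om$ (together with its trace counterpart on $\dOm$). The first key step is a Caccioppoli-type inequality: testing the minimality of $u$ against the competitor $v = u - \eta^p u_k$ (or $v = u - u_k\,\eta$ for a suitable Lipschitz cutoff $\eta$ equal to $1$ on $B_{k+1}$, supported in $B_k$, with $|g_\eta|\lesssim 2^k/R$), and using that $g$ is an upper gradient of both sides together with the structure $g_u = g_{u_k}$ on $A_k$ and $g_u=0$ a.e.\ on $\{u\le\ell_k\}$, one obtains
\[
  \int_{A_{k+1}} g_{u_k}^p\,d\mu \le C\frac{2^{kp}}{R^p}\int_{A_k} u_k^p\,d\mu
   + C\int_{\dOm\cap B_k} \eta^p\,|f|\,u_k\,dP_\Om.
\]
Because $\|f\|_{L^\infty(\dOm\cap B(x,2R))}$ is finite, the boundary term is controlled by $\|f\|_\infty \int_{\dOm\cap B_k} u_k\,dP_\Om$, i.e.\ by the trace of $u_k$; this is precisely where the difference from the interior De Giorgi scheme appears, and it is the step I expect to be the main obstacle, since the boundary integral must be reabsorbed into a power of $\mu(A_k)$ with an exponent strictly larger than $1$ to make the iteration converge.

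The second step is to convert this into an inequality purely in terms of the ``excess masses'' $a_k \coloneq \mu(A_k) + P_\Om(\dOm\cap B_k\cap\{u>\ell_k\})$ (or a comparable single quantity; by \eqref{boundary-Ahlfors-regularity} and \eqref{density} these are comparable in the relevant range). One applies the Sobolev/trace inequalities available under the standing assumptions: the interior Sobolev inequality \eqref{eq:SobolevEmb-Om} together with the trace theorem (Proposition~\ref{pro:trace}) applied on the ball $B_k$ gives, for the appropriate $\widetilde p>p$,
\[
  \Bigl(\int_{\dOm\cap B_k} u_k^{\widetilde p}\,dP_\Om\Bigr)^{1/\widetilde p}
  \le C\,r_k^{\,1-1/\widetilde p-\aleph}\Bigl(\int_{B_k\cap\Om} g_{u_k}^p\,d\mu\Bigr)^{1/p}
   + C\,\frac{P_\Om(\dOm\cap B_k)^{1/\widetilde p}}{\mu(\Om\cap B_k)}\int_{\Om\cap B_k} u_k\,d\mu,
\]
and a parallel statement with an interior Sobolev exponent $p^\sharp>p$ for $\int_{B_k\cap\Om} u_k^{p^\sharp}$. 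Interpolating these higher-integrability bounds against the raw $L^1$ or $L^p$ norms via Hölder on the level sets $A_k$, and using $u_k \ge (\ell_{k+1}-\ell_k) = L2^{-(k+1)}$ on $A_{k+1}$ to gain the geometric factor $L^{-1}2^{k}$, one arrives at a recursion of De Giorgi type
\[
  a_{k+1} \le \frac{C\,b^k}{L^{\beta}}\,a_k^{1+\alpha}
\]
for some constants $C,b>1$, $\alpha>0$, $\beta>0$ depending only on the structural data, $p$, and $\widetilde p$. The exponent $\alpha>0$ is exactly the gain produced by passing from $L^p$ (or $L^1$) to the strictly larger Sobolev/trace exponent, and the factor $L^{-\beta}$ comes from choosing the level gap proportional to $L$.

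The third step is the standard fast-geometric-convergence lemma (see e.g.\ \cite[Lemma~7.1]{GKS}-type statements, or Giusti): if $a_0 \le C^{-1/\alpha} b^{-1/\alpha^2} L^{\beta/\alpha}$ then $a_k\to 0$, which forces $u\le L$ on $B(x,R)\cap\Om$ (and, by the trace characterization in Definition~\ref{defn:traces}, $Tu\le L$ $P_\Om$-a.e.\ on $B(x,R)\cap\dOm$, hence on $\overline\Om\cap B(x,R)$ after identifying $u$ with $Tu$). Since $a_0 \le \mu(\Om\cap B(x,2R)) + P_\Om(\dOm\cap B(x,2R)) < \infty$ is a fixed finite number, it suffices to take $L$ large enough — depending only on the doubling and Poincaré constants, $p$, $R$, and on $\|u\|_{L^1(B(x,R)\cap\dOm)}$, $\|u\|_{L^p(B(x,R)\cap\Om)}$ and $\|f\|_{L^\infty(\dOm\cap B(x,2R))}$ through the right-hand side of the Caccioppoli estimate — to meet this smallness threshold. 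Running the same argument with $u$ replaced by $-u$ (the minimizer of the functional with data $-f$) gives the lower bound $u\ge -C_R$, completing the proof. The delicate points to get right are the exact bookkeeping of exponents so that $\alpha>0$ genuinely holds in the range of $p$ allowed (using $\widetilde p<p^*$ with room to spare, as in Proposition~\ref{pro:trace}), and making sure the boundary measure terms scale like a positive power of $a_k$ rather than like $a_k$ itself.
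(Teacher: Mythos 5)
Your outline follows the right contour — derive a Caccioppoli/De Giorgi inequality from the minimality of $u$ (your Step 1 is essentially Theorem~\ref{thm:DeGiorgi}), control the boundary term via the trace theorem, and iterate — but the crucial Step~2 contains a gap that the paper's proof is specifically built to avoid.

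You claim that after Chebyshev, Sobolev, and H\"older you reach a closed one-variable recursion
$a_{k+1} \le C\,b^k L^{-\beta} a_k^{1+\alpha}$ with $\alpha>0$, where $a_k$ is the \emph{level-set measure} $\mu(A_k)+P_\Om(\dOm\cap B_k\cap\{u>\ell_k\})$. This does not go through in general. Tracking exponents in the interior part alone: Chebyshev at level $p$ gives $\mu(A_{k+1})\lesssim (L2^{-k})^{-p}\int_{A_{k+1}}u_k^p$; H\"older plus the $(\kappa p,p)$-Sobolev inequality plus the Caccioppoli estimate gives $\int_{A_{k+1}}u_k^p \lesssim \bigl(\int_{B_k}g_{\eta u_k}^p\bigr)^{?}\mu(A_{k+1})^{1-1/\kappa}$ and $\int_{B_k} g_{\eta u_k}^p\lesssim \frac{2^{kp}}{R^p}\|u\|_{L^{\kappa p}}^p\mu(A_k)^{1-1/\kappa}$. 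Combining and absorbing $\mu(A_{k+1})$ leaves you with
$\mu(A_{k+1})\lesssim (b^k/L^{\kappa p})\,\mu(A_k)^{\kappa-1}$.
So the exponent on $a_k$ is $\kappa-1$, which is $>1$ only when $\kappa>2$ (in Euclidean terms $p>n/2$). For $p$ close to $1$ one has $\kappa$ close to $1$ and the recursion is \emph{sub}linear, and the fast geometric convergence lemma fails. The same phenomenon occurs for the boundary term with $\widetilde\kappa$. Your parenthetical that $\mu(A_k)$ and $P_\Om(\dOm\cap A_k)$ are ``comparable'' by \eqref{density} and \eqref{boundary-Ahlfors-regularity} is also not correct: those conditions give a scale-for-scale comparison on balls, not on arbitrary level sets $A_k$, which may touch $\dOm$ very little or not at all.

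The paper sidesteps both problems by iterating on the \emph{normalized excess integrals}, not the measures. It works with the pair of quantities $u(k,r)=\bigl(\fint_{\Om\cap B(x,r)}(u-k)_+^p\,d\mu\bigr)^{1/p}$ and $\psi(k,R)=\fint_{\dOm\cap B(x,R)}(u-k)_+\,dP_\Om$ (Lemma~\ref{lem:decay-est}), establishes the \emph{coupled} recursion \eqref{eq:ukr-uhR,psikr-psihR} in which each of $u(k,r)$, $\psi(k,r)$ appears on the right-hand side of both inequalities, and then proves Theorem~\ref{thm:bounded} by a joint induction $u(k_n,r_n)\le 2^{-\sigma n}u(k_0,R)$, $\psi(k_n,r_n)\le 2^{-\tau n}\psi(k_0,R)$ with $\sigma,\tau$ solving the system \eqref{eq:sigmatau1}--\eqref{eq:sigmatau2}. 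The gain that makes the iteration converge is $\alpha=1-1/\kappa>0$ (and $\beta=1-1/(\widetilde\kappa p)$), which is strictly positive for every $\kappa>1$; no $\kappa>2$ restriction arises because an extra factor $(u(h,R)/(k-h))^\alpha$ (coming from the Chebyshev estimate $\mu(A(k,r))\le(k-h)^{-p}\mu(B)\,u(h,R)^p$) appears multiplicatively rather than being folded into the power of a single level-set measure. If you want a one-variable recursion, you would have to iterate a weighted sum of the two excess integrals (e.g.\ $\int_{A_k}u_k^p\,d\mu + R^{p-1}\|f\|_\infty\int_{\dOm\cap A_k}u_k\,dP_\Om$), not the measures, and you would still need to account for their different scaling in $R$ — which is precisely the bookkeeping the paper's two-quantity scheme performs.
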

To prove the above theorem we make use of the technique developed by De
Giorgi~\cite{Giu2}. To do so we first derive a De Giorgi type inequality
associated with the Neumann type problem considered here.
\begin{theorem}\label{thm:DeGiorgi}
There is a constant $C\ge 1$ such that given a minimizer $u$ as above on the
bounded domain $\Om\subset X$, $x\in\dOm$, $0<r<R\le R_0<\diam(\Om)/10$, and
$k\in\RR$, we have
\begin{align}
  \label{eq:g(u-k)+est}
  \int_{\Om\cap B(x,r)} g_{(u-k)_+}^p \,d\mu & \le \frac{C}{(R-r)^p} \int_{\Om\cap B(x,R)} (u-k)_+^p\,d\mu  \\
  & \quad + C \int_{\dOm \cap B(x,R)} |f|\cdot(u-k)_+\,dP_\Om.
  \notag
\end{align}
The constant $C$ depends solely on the doubling constant of $\mu$, the
Poincar\'e inequality constants, and $p$.
\end{theorem}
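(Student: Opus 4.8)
The plan is to test the minimality of $u$ against the competitor $v = u - (u-k)_+ \eta^p$, or more precisely a Lipschitz-cutoff variant of the standard De Giorgi test function, adapted so that the competitor agrees with $u$ near the part of $\partial\Omega$ we do not want to disturb. Concretely, fix $x \in \partial\Omega$ and $0 < r < R$, choose a Lipschitz cutoff $\eta$ with $\eta = 1$ on $B(x,r)$, $\eta = 0$ outside $B(x,R)$, and $|g_\eta| \le 2/(R-r)$, and set
\[
  v = u - \eta\,(u-k)_+ .
\]
Then $v \in N^{1,p}(\Omega)$ and $v = u$ on $\Omega \setminus B(x,R)$, so $I(u) \le I(v)$ reduces to an inequality on $B(x,R)$. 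Writing $w = (u-k)_+$, we have $v = u - \eta w$, and on the set $\{u > k\}$ one computes $g_v \le (1-\eta) g_w + w\, g_\eta = g_u - \eta g_w + w g_\eta$ (using $g_u = g_w$ a.e.\ on $\{u>k\}$ and $g_w = 0$ a.e.\ on $\{u \le k\}$), while $g_v = g_u$ on $\{u \le k\}$. Feeding this into $I(u) \le I(v)$ and cancelling the common part of the energy over $\{u \le k\} \cup (\Omega \setminus B(x,R))$ gives
\[
  \int_{\Omega \cap B(x,R)} g_w^p\, \eta^p\, d\mu \;\le\; C\!\!\int_{\Omega \cap B(x,R)} \!\!\bigl((1-\eta)g_w + w g_\eta\bigr)^p d\mu
   \;+\; \int_{\partial\Omega} f\,(v - u)\, dP_\Omega,
\]
where I have used the elementary inequality bounding $g_v^p$ on $\{u>k\}$. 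The boundary term equals $-\int_{\partial\Omega \cap B(x,R)} f\, \eta\, (Tu - k)_+\, dP_\Omega \le \int_{\partial\Omega \cap B(x,R)} |f|\, (u-k)_+\, dP_\Omega$, which is exactly the second term on the right of \eqref{eq:g(u-k)+est}; note the trace commutes with truncation and multiplication by the Lipschitz function $\eta$.

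It then remains to absorb the ``bad'' part of the energy term. Expanding $\bigl((1-\eta)g_w + w g_\eta\bigr)^p$ and using $(a+b)^p \le 2^{p-1}(a^p + b^p)$, the piece $2^{p-1}\int (1-\eta)^p g_w^p$ is supported on $B(x,R)\setminus\{\eta=1\}$, so combined with the left side it does not immediately close; the standard De Giorgi device is to iterate over a chain of radii, or better, to apply the well-known lemma on functions satisfying $\phi(r) \le \theta\,\phi(R) + (\text{lower-order})$ for all $r < R$ with $\theta < 1$. More precisely, from the displayed inequality one gets
\[
  \int_{\Omega\cap B(x,r)} g_w^p\, d\mu \le C_0 \int_{\Omega\cap B(x,R)\setminus B(x,r)} g_w^p\, d\mu + \frac{C}{(R-r)^p}\int_{\Omega\cap B(x,R)} w^p\, d\mu + C\int_{\partial\Omega\cap B(x,R)} |f|\, w\, dP_\Omega,
\]
and then ``filling the hole'' (adding $C_0\int_{\Omega\cap B(x,r)} g_w^p$ to both sides, dividing by $1+C_0$) gives the same inequality with $\theta = C_0/(1+C_0) < 1$; the iteration lemma (see e.g.\ \cite{Giu2}) then yields the clean estimate with constant depending only on $p$, $C_D$, and the Poincar\'e constants.

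The main obstacle I anticipate is the bookkeeping around the upper gradient of the product $\eta\,(u-k)_+$: in the metric setting one must justify $g_{\eta w} \le \eta g_w + w g_\eta$ (Leibniz rule for upper gradients, valid for bounded Lipschitz $\eta$ and $w \in N^{1,p}$ — this is standard but should be cited), confirm that $g_w = g_u \chi_{\{u>k\}}$ a.e.\ (a standard truncation fact for minimal $p$-weak upper gradients), and verify that $v$ is an admissible competitor, i.e.\ $v \in N^{1,p}(\Omega)$ with a well-defined trace on $\partial\Omega$ — this is where the assumption that $u$ is bounded is \emph{not} yet available (that is the conclusion of Theorem~\ref{thm:bounded}, which uses this inequality), so one must argue purely within $N^{1,p}$ and use that $(u-k)_+ \in N^{1,p}(\Omega)$ with trace $(Tu-k)_+$, which follows from linearity/lattice properties of the trace operator in Definition~\ref{defn:traces}. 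A secondary technical point is that $\Omega$ supports the Poincar\'e inequality with $\lambda = 1$, which we invoke implicitly when handling the energy term via the filling-the-hole argument; none of the intermediate balls leave $\Omega$ in a way that breaks this, since all estimates are on $\Omega \cap B(x,\rho)$ and the hypotheses of Definition~\ref{Assume-Om} apply.
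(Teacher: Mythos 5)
Your proposal follows exactly the paper's argument: the same competitor $v = u - \eta\,(u-k)_+$ with a linear Lipschitz cutoff, the same Leibniz estimate for $g_v$ on $\{u>k\}$, the same cancellation of common energy outside $A(k,R)$, and the same hole-filling plus Giusti's iteration lemma~\cite[Lemma~6.1]{Giu2} to obtain~\eqref{eq:g(u-k)+est}. The only slip is the $\eta^p$ factor in your first displayed inequality (testing the minimality gives simply $\int_{A(k,R)} g_w^p$ on the left, not $\int g_w^p\eta^p$), but this is harmless since your subsequent display correctly restricts the left side to $B(x,r)$ where $\eta=1$, after which the hole-filling argument closes as stated.
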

\begin{proof}
Let $x, r, R$ be as in the statement of the theorem, and let
\begin{equation}\label{eq:cut-off}
\eta_{r,R}(y)=\eta(y) = \bigl(1-\dist(y, B(x,r))/(R-r)\bigr)_+
\end{equation}
be a Lipschitz cut-off function. For $k\in\RR$ and $\rho>0$, define
\[
A(k,\rho) = \{y \in B(x,\rho) \cap \Om\, :\, u(y)>k\}\cup\{y\in B(x,\rho)\cap\dOm\, :\, T(u)(y)>k\}.
\]
Note that by our standing assumptions on $\dOm$, we automatically have
$\mu(\dOm)=0$, and so integrating over $A(k,\rho)\cap\Om$ with respect to $\mu$
is the same as integrating over $A(k,\rho)$ with respect to $\mu$. For the
function
\[
  v = u - \eta\cdot(u-k)_+ =
  \begin{cases}
    (1-\eta)(u-k)+k & \text{in } A(k,R), \\
    u & \text{otherwise},
  \end{cases}
\]
by the properties of upper gradient (see~\cite{BBbook}) such as the Leibniz
rule, we have
\begin{equation}
  g_v \le
  \begin{cases}
      (1-\eta)g_u + \frac{u-k}{R-r} \chi_{B(x,R)\setminus B(x, r)} & \text{in } A(k,R), \\
    g_u & \text{otherwise}.
  \end{cases}
\label{eq:gv_estimate}
\end{equation}
Since $v$ is a candidate for the minimizer of $I$, we have $I(u) \le I(v)$.
Thus,
\[
  \int_{\Om\cap B(x,R)} g_u^p\,d\mu + \int_{\dOm \cap B(x,R)} fu\,dP_\Om \le \int_{\Om\cap B(x,R)} g_v^p\,d\mu
    + \int_{\dOm \cap B(x,R)} fv\,dP_\Om.
\]
Subtracting $\int_{\Om\cap B(x,R)\setminus A(k,R)} g_u^p\,d\mu + \int_{\dOm
\cap B(x,R)} fu\,dP_\Om$ from both sides of the inequality yields that
\begin{equation}
  \label{eq:ARk-gugv-est}
  \int_{A(k,R)} g_u^p\,d\mu \le \int_{A(k,R)} g_v^p\,d\mu - \int_{\dOm \cap A(k,R)} f\eta\cdot(u-k)\,dP_\Om.
\end{equation}
From \eqref{eq:gv_estimate}, we obtain the almost everywhere pointwise estimate
\[
  g_v^p \le 2^p \biggl( g_u^p(1-\chi_{A(k,r)}) + \frac{(u-k)^p}{(R-r)^p}\biggr) \quad\text{on $A(k,R)$.}
\]
Plugging in this estimate into \eqref{eq:ARk-gugv-est} and making the
integration domain on the left-hand side smaller, we have
\begin{align*}
  \int_{A(k,r)} g_u^p\,d\mu & \le 2^p \int_{A(k,R)\setminus A(k,r)} g_u^p\,d\mu \\
  &\quad + \frac{2^p}{(R-r)^p} \int_{A(k,R)} (u-k)^p\,d\mu - \int_{\dOm \cap A(k,R)} f\eta\cdot(u-k)\,dP_\Om.
\end{align*}
Adding $2^p \int_{A(k,r)} g_u^p\,d\mu$, and then dividing by $(1+2^p)$ leads to
\begin{align}
\notag
  \int_{A(k,r)} g_u^p&\,d\mu  \le \theta \int_{A(k,R)} g_u^p\,d\mu \\
  & + \frac{\theta}{(R-r)^p} \int_{A(k,R)} (u-k)^p\,d\mu
    - \frac{1}{C} \int_{\dOm \cap A(k,R)} f\eta\cdot(u-k)\,dP_\Om,
  \label{eq:ArkARk-Cest}
\end{align}
where $\theta = 2^p/(1+2^p) \in (0,1)$ and $C=1+2^p\ge 1$.

Now, we can apply \cite[Lemma 6.1]{Giu2} with \eqref{eq:ArkARk-Cest} as the
starting inequality to obtain
\[
  \int_{A(k,r)} g_u^p \,d\mu \le \frac{C}{(R-r)^p} \int_{A(k,R)} (u-k)^p\,d\mu
  + C \int_{\dOm \cap A(k,R)} |f|\cdot(u-k)\,dP_\Om,
\]
This verifies~\eqref{eq:g(u-k)+est} and completes the proof of the theorem.
\end{proof}
\begin{remark}
If $f>0$ on $B(x,R_0)$, then the inequality \eqref{eq:ArkARk-Cest} can be made
simpler by omitting the last term, viz.,
\[
  \int_{A(k,r)} g_u^p \,d\mu  \le \theta \int_{A(k,R)} g_u^p\,d\mu
   + \frac{1}{(R-r)^p} \int_{A(k,R)} (u-k)^p\,d\mu.
\]
In such a case \cite[Lemma 6.1]{Giu2} provides us with an estimate
\[
  \int_{A(k,r)} g_u^p \,d\mu \le \frac{C}{(R-r)^p} \int_{A(k,R)} (u-k)^p\,d\mu,
\]
which holds for every $0<r<R<R_0$.
\end{remark}
\begin{lemma}\label{lem:decay-est}
Let $x \in \dOm$ and $0<r<R<R_0$ as above, and let
 $C_f = \|f\|^{1/p}_{L^\infty(\dOm\cap B(x,R_0))}$,
\[
u(k, r) =\biggl(\fint_{\Om\cap B(x,r)} (u-k)_+^p\,d\mu\biggr)^{1/p},
\]
and
\[
\psi(k, R) =\fint_{\dOm\cap B(x,R)} (u-k)_+\,dP_\Om.
\]
If $N_\loc^{1,p}(\Om)\subset L_\loc^{\kappa p}(\Om)$ and the trace operator $T:
N^{1,p}(\Om) \to L^{\widetilde\kappa p}(\dOm)$ is bounded for some $\kappa,
\widetilde{\kappa}>1$ and $0<\aleph<1$, then for all real numbers $h,k$ with
$h<k$, all positive $R, r$ with $R/2 \le r< R<R_0$, setting $\alpha \coloneq
1-\frac{1}{\kappa}$, and $\beta \coloneq 1-\frac{1}{\widetilde\kappa p}$ yields
that
\begin{align}\notag
  u(k,r) & \le C \biggl( \frac{u(h,R)}{k-h} \biggr)^{\alpha}\biggl(\frac{R}{R-r} u(h,R)
       + C_f R^{1-1/p} \psi(h,R)^{1/p}\biggr), \quad\text{and} \\
\label{eq:ukr-uhR,psikr-psihR}
  \psi(k,r) & \le C \biggl( \frac{\psi(h,R)}{k-h} \biggr)^{\beta}\biggl(\frac{R^{1-\aleph}}{R-r} u(h,R)
        +C_f R^{1-1/p -\aleph} \psi(h,R)^{1/p}\biggr).
\end{align}
If in addition $\mu$ is Ahlfors $s$-regular at scale $r_0>0$, then we also have
\begin{equation}\label{eq:Ahlfors-strong}
\psi(k,r)\le C  \biggl( \frac{\psi(h,R)}{k-h} \biggr)^{\beta}
   \biggl[ \frac{R}{R-r} u(h,R) + C_f R^{1-1/p}\psi(h,R)^{1/p} \biggr].
\end{equation}
\end{lemma}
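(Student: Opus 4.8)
The plan is to derive the two decay estimates from the De Giorgi inequality \eqref{eq:g(u-k)+est} combined with the Sobolev embedding $N^{1,p}_\loc(\Om) \subset L^{\kappa p}_\loc(\Om)$ and the trace bound $T\colon N^{1,p}(\Om) \to L^{\widetilde\kappa p}(\dOm)$. First I would fix $h<k$ and note that $(u-h)_+ \ge (u-h)_+ - (k-h) \ge 0$ on $A(k,\cdot)$, so that on the superlevel set where $u>k$ we have $(u-k)_+ \le (u-h)_+$ and, crucially, $k-h \le (u-h)_+$ there, i.e.\ $\chi_{A(k,\rho)} \le \bigl((u-h)_+/(k-h)\bigr)^{t}$ for any $t>0$. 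This is the standard mechanism that converts the truncation gap $k-h$ into a power of the measure of $A(k,\rho)$. Applying \eqref{eq:g(u-k)+est} to the truncation level $k$ (with inner radius $r$, outer radius $R$) gives control of $\int_{\Om\cap B(x,r)} g_{(u-k)_+}^p\,d\mu$ by $\tfrac{C}{(R-r)^p}\int_{\Om\cap B(x,R)}(u-k)_+^p\,d\mu + C\int_{\dOm\cap B(x,R)}|f|(u-k)_+\,dP_\Om$; bounding $|f| \le C_f^p$ and $(u-k)_+ \le (u-h)_+$ on the boundary piece, the second term is $\le C C_f^p\, P_\Om(\dOm\cap B(x,R))\,\psi(h,R)$, which by \eqref{boundary-Ahlfors-regularity} is comparable to $C C_f^p\, \tfrac{\mu(B(x,R))}{R}\,\psi(h,R)$.

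Next I would run the Sobolev/trace step. For the first inequality, apply the Sobolev embedding \eqref{eq:SobolevEmb-Om} (localized to $B(x,R)\cap\Om$ via a cut-off, using \eqref{density} so that $B(x,r)\cap\Om$ carries a fixed fraction of $\mu(B(x,r))$) to the function $(u-k)_+$: this controls $\|(u-k)_+\|_{L^{\kappa p}(\Om\cap B(x,r))}$ by a constant times $R\bigl(\fint_{\Om\cap B(x,R)} g_{(u-k)_+}^p\bigr)^{1/p}$ plus the $L^p$ average of $(u-k)_+$ on the larger ball. Then Hölder on $A(k,r)$ with exponents $\kappa$ and $\kappa' = \kappa/(\kappa-1)$ gives
\[
u(k,r)^p = \fint_{\Om\cap B(x,r)}(u-k)_+^p\,d\mu
\le \Bigl(\frac{\mu(A(k,r))}{\mu(\Om\cap B(x,r))}\Bigr)^{1/\kappa'}
\Bigl(\fint_{\Om\cap B(x,r)}(u-k)_+^{\kappa p}\,d\mu\Bigr)^{1/\kappa},
\]
and $\mu(A(k,r))/\mu(\Om\cap B(x,r)) \le \bigl(u(h,R)/(k-h)\bigr)^p\cdot C$ by the Chebyshev-type bound above (comparing averages over $B(x,r)$ and $B(x,R)$ using doubling). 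Substituting the De Giorgi bound for the upper-gradient term and the boundary estimate produces exactly the claimed inequality with $\alpha = 1 - 1/\kappa$ (the exponent on $u(h,R)/(k-h)$ is $p\cdot\tfrac{1}{p}\cdot\tfrac{1}{\kappa'}\cdot\tfrac{1}{p}$—I will have to track the bookkeeping so the final power matches $\alpha$, absorbing $p$-th roots appropriately). For the second inequality, I replace the interior Sobolev embedding by the trace theorem $T\colon N^{1,p}\to L^{\widetilde\kappa p}(\dOm)$, applied to $(u-k)_+$ on $B(x,R)\cap\Om$; its quantitative form (Proposition~\ref{pro:trace}, or the hypothesized bound) yields $\|(u-k)_+\|_{L^{\widetilde\kappa p}(\dOm\cap B(x,r))} \le C\bigl(R^{1-\aleph - 1/(\widetilde\kappa p)}\|g_{(u-k)_+}\|_{L^p(\Om\cap B(x,R))} + (\text{lower order}))$—here the factor $R^{1-\aleph}$ is precisely where the exponent $\aleph$ enters. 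Then Hölder on the boundary superlevel set with exponents $\widetilde\kappa p$ and its conjugate, together with $P_\Om(A(k,r)\cap\dOm) \le \bigl(\psi(h,R)/(k-h)\bigr)^{\widetilde\kappa p/(\widetilde\kappa p - 1)}\cdot\ldots$ gives the factor $\bigl(\psi(h,R)/(k-h)\bigr)^{\beta}$ with $\beta = 1 - 1/(\widetilde\kappa p)$. Plugging in the De Giorgi control of $\|g_{(u-k)_+}\|_{L^p}$ finishes it.

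Finally, for \eqref{eq:Ahlfors-strong}, I would redo the second computation using the sharper form of the trace inequality available under Ahlfors $s$-regularity, where Proposition~\ref{pro:trace} holds with $\eps=0$, hence with $\aleph = s(1/p - 1/\widetilde p)$ replaced by the sharp value and, more to the point, the factor $R^{1-\aleph}$ upgraded to $R^{1}$ (equivalently $\widetilde\aleph$ collapses so that the exponent on the upper-gradient term becomes $1 - 1/(\widetilde\kappa p)$ in the scaling that produces the $R/(R-r)$ factor rather than $R^{1-\aleph}/(R-r)$); the rest of the argument is verbatim. The main obstacle, as usual in De Giorgi iteration setups, is the exponent bookkeeping: keeping careful track of the $p$-th roots, the doubling constants relating $\mu(B(x,r))$ and $\mu(B(x,R))$ when $R/2\le r<R$, and verifying that the powers of $k-h$ coming from Hölder (namely $1/\kappa'$ after the $p$-th root, which is $\alpha$) and the powers of $R$ coming from scaling the Sobolev and trace inequalities assemble into exactly the stated right-hand sides. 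The use of \eqref{density} to pass between averages over $\Om\cap B$ and over $B$, and of \eqref{boundary-Ahlfors-regularity} to estimate $P_\Om(\dOm\cap B(x,R))$, are the two structural inputs that make all the constants depend only on the allowed data.
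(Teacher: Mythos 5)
Your proposal is correct and follows essentially the same route as the paper: H\"older with exponents $\kappa, \kappa'$ combined with the $(\kappa p,p)$-Poincar\'e inequality and the De Giorgi estimate~\eqref{eq:g(u-k)+est} for the interior bound, H\"older plus Proposition~\ref{pro:trace} and~\eqref{eq:g(u-k)+est} for the boundary bound, and Chebyshev-type estimates on the superlevel sets to introduce the $k-h$ factors. One small imprecision worth flagging: in your discussion of~\eqref{eq:Ahlfors-strong}, the improvement from $R^{1-\aleph}$ to $R$ does not come from the trace inequality itself carrying a better power of $r$ under Ahlfors regularity; Proposition~\ref{pro:trace} still gives the factor $r^{1-1/(\widetilde\kappa p)-\aleph}$ with $\eps=0$. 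The gain arises because the normalization factor $\mu(B(x,r))^{(\widetilde\kappa-1)/(\widetilde\kappa p)}$, which appears when converting between integrals and averages, is bounded by $C r^{s(\widetilde\kappa-1)/(\widetilde\kappa p)}$ under Ahlfors regularity, and since $\aleph = s(1/p - 1/(\widetilde\kappa p))$ this exactly cancels the $r^{-\aleph}$. Without Ahlfors regularity the paper instead uses the crude bound $\mu(B(x,r))\le\mu(\Om)$, which leaves the $r^{-\aleph}$ unabsorbed.
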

We can always chose such $\kappa, \widetilde{\kappa}$, for instance, by
choosing $1<\kappa<s/(s-p)$ and $1<\widetilde{\kappa}<(s-1)/(s-p)$ as in
Proposition~\ref{pro:trace}. If $p$ is close to $s$ then $\kappa$ and
$\widetilde{\kappa}$ can be chosen to be as large as we like.
\begin{proof}
Due to self-improvement of $(1,p)$-Poincar\'e inequality, there is $\kappa>1$
such that $\Om$ supports a $(\kappa p, p)$-Poincar\'e inequality, see for
example~\cite{HaK, BBbook}. Here any choice of $1<\kappa\le s/(s-p)$ works,
where $s$ is the upper mass bound exponent of the doubling measure $\mu$ as
in~\eqref{eq:mu-dimension}.

Let $\widetilde{\eta}$ be the cut-off function $\eta_{r, (r+R)/2}$ as
in~\eqref{eq:cut-off}. Then, the H\"older inequality and the $(\kappa p,
p)$-Poincar\'e inequality for functions in $N^{1,p}(X)$ vanishing on
$X\setminus B(x,(r+R)/2)$ yield
\begin{align*}
  \fint_{\Om\cap B(x,r)} (&u-k)_+^p\,d\mu \le
  \biggl(\frac{\mu(A(k,r))}{\mu(B(x,r))} \biggr)^{1-1/\kappa} \biggl(\fint_{\Om\cap B(x,r)} (u-k)_+^{\kappa p}\,d\mu\biggr)^{1/\kappa} \\
  & \le C \biggl(\frac{\mu(A(k,r))}{\mu(B(x,r))} \biggr)^{1-1/\kappa}
      \biggl(\fint_{\Om\cap B(x,(r+R)/2)} \bigl(\widetilde{\eta}(u-k)_+\bigr)^{\kappa p}\,d\mu\biggr)^{1/\kappa} \\
  & \le C \biggl(\frac{\mu(A(k,r))}{\mu(B(x,r))} \biggr)^{1-1/\kappa} R^p \fint_{\Om\cap B(x,(r+R)/2)} g_{\widetilde{\eta}(u-k)_+}^p\,d\mu \\
  & \le C \biggl(\frac{\mu(A(k,r))}{\mu(B(x,r))} \biggr)^{1-1/\kappa} R^p \fint_{\Om\cap B(x,(r+R)/2)} g_{(u-k)_+}^p + \frac{(u-k)_+^p}{(R-r)^p}\,d\mu,
\end{align*}
where the product rule (Leibniz rule) for ($p$-weak) upper gradients was used
in the last step. Estimating the integral of $g_{(u-k)_+}^p$ via
\eqref{eq:g(u-k)+est} gives
\begin{align*}
  \fint_{\Om\cap B(x,r)} (u-k)_+^p\,d\mu \le C \biggl(\frac{\mu(A(k,r))}{\mu(B(x,r))}\biggr)^{1-1/\kappa} \biggr[\frac{R^p}{(R-r)^p} \fint_{\Om\cap B(x,R)}(u-k)_+^p\,d\mu\quad\\
   + R^{p-1} \fint_{\dOm \cap B(x,R)} |f|(u-k)_+\,dP_\Om \biggr].
\end{align*}
It follows that
\begin{equation}
  \label{eq:ukr1}
  u(k,r) \le C \biggl(\frac{\mu(A(k,r))}{\mu(B(x,r))}\biggr)^{\frac{\kappa-1}{\kappa p}}
   \biggl(\frac{R}{R-r} u(k,R) + C_f R^{1-1/p} \psi(k,R)^{1/p}\biggr)
\end{equation}
We will now show that $\bigl(\mu(A(k,r))/\mu(B(x,r))\bigr)^{1/p} < C
u(h,R)/(k-h)$ whenever $h<k$. Since $u\ge k$ on $A(k,R)$, we have
\begin{align*}
  (k-h)^p \mu(A(k,r)) & \le \int_{A(k,r)} (u-h)^p\,d\mu \le \int_{A(h,r)} (u-h)^p\,d\mu\\
  & = \mu(B(x,r)) u(h,r)^p \le C \mu(B(x,r)) u(h,R)^p
\end{align*}
as desired.

Using this estimate as well as the inequalities $u(k,R) \le u(h, R)$ and
$\psi(k,R) \le \psi(h, R)$ in \eqref{eq:ukr1} yields that
\begin{equation}
  \label{eq:ukr-uhR}
  u(k,r) \le C \biggl( \frac{u(h,R)}{k-h} \biggr)^{\frac{\kappa-1}{\kappa}}\biggl(\frac{R}{R-r} u(h,R) + C_f R^{1-1/p} \psi(h,R)^{1/p}\biggr).
\end{equation}
Thus we have verified the first of the two inequalities claimed in the lemma.

Let us now establish an analogous inequality for $\psi(k,r)$. Let
$\widetilde{\kappa}>1$ be such that $\widetilde\kappa p = \widetilde{p}$, where
$\widetilde{p}$ is an admissible target exponent for the trace operator, see
Proposition~\ref{pro:trace}. It follows from the H\"older inequality that
\begin{align*}
 \psi(k,r)& = \fint_{\dOm \cap B(x,r)} (u-k)_+\,dP_\Om \\
 & \le \biggl(\fint_{\dOm \cap B(x,r)} (u-k)_+^{\widetilde\kappa p} \,dP_\Om\biggr)^{1/\widetilde\kappa p} \cdot \biggl( \frac{P_\Om(A(k,r)\cap \dOm)}{P_\Om(B(x,r)\cap \dOm)}\biggr)^{1-1/\widetilde\kappa p}\,.
\end{align*}
Then, Proposition~\ref{pro:trace} yields that
\begin{align*}
  \biggl(\int_{\dOm \cap B(x,r)} (u-k)_+^{\widetilde\kappa p} \,dP_\Om\biggr)^{1/\widetilde\kappa p} & \le C r^{1-1/\widetilde{\kappa}p-\aleph} \biggl(\int_{\Om\cap B(x, r)} g_{(u-k)_+}^p \,d\mu\biggr)^{1/p}\\
  & \quad + C P_\Om(\dOm \cap B(x,r))^{1/\widetilde{\kappa}p} \fint_{\Om\cap B(x, r)} (u-k)_+ \,d\mu.
\end{align*}
Combining these two inequalities together with the assumption of co-dimension
$1$ Ahlfors regularity of $P_\Om$ results in
\begin{align}
  \label{eq:psikr-HAkr/muBxr}
  \psi(k,r) & \le C \biggl( \frac{P_\Om(A(k,r)\cap \dOm)}{P_\Om(B(x,r)\cap \dOm)}\biggr)^{1-1/\widetilde\kappa p}
  \\
  & \quad \cdot\biggl( r^{1-\aleph} \mu(B(x,r))^{\frac{\widetilde{\kappa}-1}{\widetilde\kappa p}}
  \biggl(\fint_{\Om\cap B(x, r)} g_{(u-k)_+}^p \,d\mu\biggr)^{1/p} + u(k,r)\biggr). \notag
\end{align}
For an arbitrary $h<k$, we have
\begin{align*}
  (k-h) P_\Om(A(k,r)\cap\dOm) & \le \int_{A(k,r)\cap\dOm} (u-h)\,dP_\Om \\
  &\le \int_{A(h,r)\cap\dOm} (u-h)\,dP_\Om \le P_\Om(B(x,r)\cap\dOm) \psi(h,r).
\end{align*}
Applying this inequality together with \eqref{eq:g(u-k)+est} to
\eqref{eq:psikr-HAkr/muBxr} yields that
\begin{align*}
  & \psi(k,r) \\
  & \le C \biggl( \frac{\psi(h,r)}{k-h} \biggr)^{\frac{\widetilde{\kappa}p-1}{\widetilde\kappa p}}
   \biggl( r^{1-\aleph} \mu(B(x,r))^{\frac{\widetilde{\kappa}-1}{\widetilde\kappa p}}
   \biggl(\fint_{\Om\cap B(x, r)} g_{(u-k)_+}^p \,d\mu\biggr)^{1/p} + u(k,r)\biggr) \\
  & \le C \biggl( \frac{\psi(h,R)}{k-h} \biggr)^{\frac{\widetilde{\kappa}p-1}{\widetilde\kappa p}}
  \biggl[ r^{1-\aleph} \mu(B(x,r))^{\frac{\widetilde{\kappa}-1}{\widetilde\kappa p}} \biggl(\frac{u(k,R)}{R-r}
  + \frac{(C_f \psi(k,R))^{1/p}}{R^{1/p}}\biggr)+ u(k,R)\biggr]\\
  & \le C \biggl( \frac{\psi(h,R)}{k-h} \biggr)^{\frac{\widetilde{\kappa}p-1}{\widetilde\kappa p}} \biggl[
    \biggl(1+\frac{R^{1-\aleph}}{R-r}\biggr) u(h,R) + C_f R^{1-1/p -\aleph}\psi(h,R)^{1/p}
  \biggr],
\end{align*}
where the crude estimate $\mu(B(x,r)) \le \mu(\Om)$ was used in the last line.
Since $R-r\le R/2\le R_0/2$, and since $0<1-\aleph<1$, the desired inequality
for $\psi$ follows.

If $\mu$ happens to be Ahlfors $s$-regular at scale $r_0>0$, then a finer
estimate $\mu(B(x,r)) \le C r^s$ is to be used above. Since
$\aleph=s(\frac{1}{p}-\frac{1}{\widetilde{p}})$ and $\widetilde{p} =
\widetilde{\kappa} p$, we have
\[
  r^{-\aleph} \mu(B(x,r))^{\frac{\widetilde{\kappa}-1}{\widetilde\kappa p}}
  \le C r^{s (\frac{1}{\widetilde{\kappa} p} - \frac{1}{p})} r^{s \frac{\widetilde{\kappa}-1}{\widetilde\kappa p}} = C\,.
\]
Then, it follows from the penultimate line of the estimate of $\psi(k,r)$ above
that
\begin{align*}
  \psi(k,r)
  & \le C \biggl( \frac{\psi(h,R)}{k-h} \biggr)^{\frac{\widetilde{\kappa}p-1}{\widetilde\kappa p}}
  \biggl[ r \biggl(\frac{u(k,R)}{R-r}
  + \frac{(C_f \psi(k,R))^{1/p}}{R^{1/p}}\biggr)+ u(k,R)\biggr]\\
  & \le C \biggl( \frac{\psi(h,R)}{k-h} \biggr)^{\frac{\widetilde{\kappa}p-1}{\widetilde\kappa p}} \biggl[
    \biggl(1+\frac{R}{R-r}\biggr) u(h,R) + C_f R^{1-1/p}\psi(h,R)^{1/p}
  \biggr]\,.
\end{align*}
Again noting that $R-r\le R/2$, we obtain the
inequality~\eqref{eq:Ahlfors-strong}.
\end{proof}
We are now ready to prove the main theorem of this section. Recall that the
minimizer $u$ necessarily belongs to $L^1(\Om)$ and its trace belongs to
$L^1(\partial\Om, P_\Om)$. The boundedness estimates we obtain in the proof
indicate that the bound on $u$ is determined by its trace's average value on
the boundary of $\Om$ with respect to the measure $P_\Om$ as well as on the
average of $u$ on the ball, and on the bound on $f$ on the boundary of $\Om$.
This is in contrast to the local boundedness estimates of~\cite{KS} for
$p$-energy minimizers in the interior of $\Om$, where the bound is determined
by the average value of $u$ alone.
\begin{proof}[Proof of Theorem~\ref{thm:bounded}]
In order to prove that $u$ is bounded from above near the boundary, it suffices
to show that for a fixed $R>0$ with $R<R_0/4$ and $k_0 \in \RR$ we can find $d
\ge 0$ such that $u(k_0+d,R/2) = 0$, where $u(k,r)$ is as in
Lemma~\ref{lem:decay-est}.

If $u(k_0,R) = 0$, then we immediately obtain the upper bound that $u\le k_0$
in $B(x,R)$. In what follows, suppose that $u(k_0,R)> 0$.

Let $r_n = (1+2^{-n})\cdot R/2$ and $k_n = k_0+d(1-2^{-n})$, where the precise
value of $d>0$ will be determined later. Setting $h=k_n$, $k=k_{n+1}$, $\rho =
r_n$, and $r= r_{n+1}$ in \eqref{eq:ukr-uhR,psikr-psihR} yields that
\begin{align}
\notag
  u(k_{n+1},r_{n+1}) &\le C \biggl( \frac{u(k_n,r_n)}{2^{-n-1}d} \biggr)^{\alpha}\biggl(\frac{1+2^{-n}}{2^{-n-1}} u(k_n,r_n) + C_f r_n^{1-1/p} \psi(k_n,r_n)^{1/p}\biggr)\\
  &\le C_{f,R} \frac{2^{n(\alpha + 1)}}{d^{\alpha}} \bigl(u(k_n, r_n)^{1+\alpha} + u(k_n, r_n)^{\alpha} \psi(k_n,r_n)^{1/p}\bigr)
  \label{eq:ukn+1rn+1}
\end{align}
and analogously
\begin{equation}
\label{eq:psikn+1rn+1}
  \psi(k_{n+1},r_{n+1}) \le C_{f,R} \frac{2^{n(\beta + 1)}}{d^{\beta}} \bigl(u(k_n, r_n) \psi(k_n,r_n)^{\beta} + \psi(k_n, r_n)^{\beta+1/p}\bigr),
\end{equation}
where $C_{f,R} = C\cdot (1+C_f R^{1-1/p}+R^{-\aleph} + C_f R^{1-1/p-\aleph})$. By induction, we will show that
\begin{equation}
  u(k_n, r_n) \le 2^{-\sigma n} u(k_0,R)\quad\text{ and }\quad\psi(k_n, r_n) \le 2^{-\tau n} \psi(k_0,R)
\label{eq:ukn,psikn-ind}
\end{equation}
for a suitable choice of positive constants $\sigma$, $\tau$, and $d$. In such
a case, we will have $u(k_0+d,R/2) = \lim_{n\to\infty} u(k_n, r_n) = 0$.
Observe that both inequalities in \eqref{eq:ukn,psikn-ind} are satisfied for
$n=0$.

If $\psi(k_0, R) = 0$, then the second inequality in \eqref{eq:ukn,psikn-ind}
is vacuously satisfied. If $\psi(k_0, R) \neq 0$, then \eqref{eq:psikn+1rn+1}
together with \eqref{eq:ukn,psikn-ind} lead to
\begin{align*}
\psi(k_{n+1}&, r_{n+1}) \le \frac{\psi(k_0,R)}{2^{\tau (n+1)}} \cdot \frac{2^{\tau (n+1)}}{\psi(k_0,R)} \\
& \quad \cdot C_{f,R} \frac{2^{n(\beta + 1)}}{d^{\beta}} \biggl[\frac{u(k_0, R)}{2^{\sigma n}} \biggl(\frac{\psi(k_0, R)}{2^{\tau n}}\biggr)^{\beta} + \biggl(\frac{\psi(k_0, R)}{2^{\tau n}}\biggr)^{\beta+1/p}\biggr] \\
& \le \frac{\psi(k_0,R)}{2^{\tau (n+1)}} \cdot \frac{C_{f,R}}{d^{\beta} \psi(k_0,R)^{1-\beta}} 2^{\tau+n(\tau + \beta + 1 - \tau\beta)}\biggl(\frac{u(k_0,R)}{2^{\sigma n}} + \frac{\psi(k_0,R)^{1/p}}{2^{\tau n/p}}\biggr).
\end{align*}
Thus, if \eqref{eq:ukn,psikn-ind} is to be satisfied when $\psi(k_0, R) \neq
0$, we need
\begin{equation}
  \label{eq:sigmatau1}
  \tau + \beta + 1 - \tau\beta - \sigma \le 0\quad \text{and}\quad \tau + \beta + 1 - \tau\beta - \frac{\tau}{p} \le 0
\end{equation}
as well as
\begin{equation}
  \label{eq:d-est1}
  d \ge \biggl(\frac{C_{f,R} 2^\tau \bigl(u(k_0,R) + \psi(k_0,R)^{1/p}\bigr)}{\psi(k_0,R)^{1-\beta}}\biggr)^{1/\beta}.
\end{equation}
Analogously, inequalities \eqref{eq:ukn+1rn+1} and \eqref{eq:ukn,psikn-ind}
provide us with the estimate
\begin{align*}
  u(k_{n+1}, r_{n+1}) & \le \frac{u(k_0,R)}{2^{\sigma (n+1)}} \\
  & \quad \cdot \frac{C_{f,R}}{d^{\alpha} u(k_0,R)^{1-\alpha}} 2^{\sigma+n(\sigma + \alpha + 1 - \sigma\alpha)}\biggl(\frac{u(k_0,R)}{2^{\sigma n}} + \frac{\psi(k_0,R)^{1/p}}{2^{\tau n/p}}\biggr).
\end{align*}
Therefore, we need
\begin{equation}
  \label{eq:sigmatau2}
  \alpha + 1 - \sigma \alpha \le 0 \quad \text{and} \quad
  \sigma + \alpha + 1 - \sigma \alpha - \frac{\tau}{p} \le 0
\end{equation}
as well as
\begin{equation}
  \label{eq:d-est2}
  d \ge \biggl(\frac{C_{f,R}2^\sigma \bigl(u(k_0,R)
  + \psi(k_0,R)^{1/p}\bigr)}{u(k_0,R)^{1-\alpha}} \biggr)^{1/\alpha}.
\end{equation}
Simplifying \eqref{eq:sigmatau1} and \eqref{eq:sigmatau2} yields
\begin{align*}
 \max\biggl\{1+ \frac{1}{\alpha}\,, \tau(1-\beta)+ 1+\beta \biggr\} & \le \sigma
 \le \frac{\frac{\tau}{p} -(1+\alpha)}{1-\alpha}\quad \text{and}\quad
\tau \ge \frac{\beta + 1}{\beta + \frac1p -1}.
\end{align*}
Recall that $\alpha = 1 - \frac{1}{\kappa}$ and $\beta = 1-
\frac{1}{\widetilde{\kappa} p}$, where $\kappa>1$ is chosen such that
$N^{1,p}(\Om) \subset L^{\kappa p}(\Om)$ while $\widetilde{\kappa}>1$ is chosen
such that the trace operator maps $N^{1,p}(\Om)$ into
$L^{\widetilde{\kappa}p}(\dOm)$. Choosing
\[
  \tau \ge \max\biggl\{\frac{2 \widetilde{\kappa} p -1}{\widetilde{\kappa}-1}\,,
  p (\kappa-1),
  \frac{2p+2\kappa -1 - 1/\widetilde{\kappa}}{\kappa - 1/\widetilde{\kappa}}
  \biggr\}
\]
will allow us to find $\sigma$ so that both \eqref{eq:sigmatau1} and
\eqref{eq:sigmatau2} are fulfilled, which will then enable us to use
\eqref{eq:d-est1} and \eqref{eq:d-est2} to find a sufficiently big value of
$d$.

For such a constant $d$, we have
\[
  0 = u\Bigl(k_0+d, \frac{R}{2}\Bigr) = \biggl(\fint_{\Om\cap B(x,R/2)} (u-k_0-d)^p_+\,d\mu\biggr)^{1/p},
\]
which shows that $u \le k_0+d$ $\mu$-a.e.\@ in $B(x,R/2)$. Analogously, we have
the trace $Tu \le k_0+d$ $P_\Om$-a.e.\@ in $\dOm \cap B(x,R/2)$. Running the
argument once more with $u$ and $f$ replaced by $-u$ and $-f$, respectively, we
obtain that $u \in L^\infty(\Om_R)$ and $Tu \in L^\infty(\dOm)$, where $\Om_R =
\{z \in \Om: \dist(z,\dOm) < R/2\}$.

Letting $k_0=0$ yields the desired conclusion.
\end{proof}
\section{Further boundary regularity}
In PDE literature, the part of the boundary where the Neumann data $f$ vanishes
is called the natural boundary. If $x\in\dOm$ and $r>0$ such that $f=0$ on
$\dOm\cap B(x,r)$, then
\[
 \int_{\Om\cap B(x,r)}g_u^p\, d\mu\le \int_{\Om\cap B(x,r)}g_{u+\varphi}^p\, d\mu
\]
for every $\varphi\in N^{1,p}(X)$ with compact support in $B(x,r)$, i.e., $u$
is $p$-harmonic in $\Om\cup(\dOm\cap B(x,r))$. Thus, given our standing
assumptions on $\Om$, the results of~\cite{KS} apply to $u$ on
$B(x,r)\cap\overline{\Om}$, to yield that $u$ is locally H\"older continuous in
$B(x,r)\cap\overline{\Om}$. We have so far no boundary H\"older continuity of
$u$ at other parts of $\dOm$. In the Euclidean setting, we know from the work
of~\cite{ADN1, ADN2, KLS, Tay} that if $\Om$ is a bounded Euclidean domain of
class $C^1$, and the boundary data $f$ is H\"older continuous, then $u$ is
H\"older continuous at $\dOm$. On the other hand, we obtain partial regularity
results for $u$ near sets of positivity of $f$ (and correspondingly, sets of
negativity of $f$) in this section using the results from~\cite{KinMa1, KinMa2,
BBbook} on nonlinear potential theory on metric measure spaces. These will
allow us to prove continuity of $u$ up to the boundary on open subsets of
positivity (or negativity) of $f$ for values of $p$ close to $1$ or close to
$s$ in Section~\ref{sec:cont}.
\begin{definition}
Let $(Y,d_Y,\mu_Y)$ be a metric measure space. A function $v$ on an open set
$A\subset Y$ is a \emph{$p$-subminimizer} if
\[
 \int_A g_v^p\, d\mu_Y\le \int_A g_{v+\varphi}^p\, d\mu_Y
\]
for every non-positive $\varphi\in N^{1,p}(Y)$ that is compactly supported in
$A$.
\end{definition}
The notion of subminimizers in the metric setting is extensively studied; a
non-exhaustive listing of papers about subminimizers in the metric setting
is~\cite{S1, KinMa1, KinMa2, BAnd, BJan, BBP}. The book~\cite{BBbook} contains
a nice discussion of nonlinear potential theory in metric setting.

It is known that if $\mu_Y$ is doubling, $Y$ is complete, and supports a
$p$-Poincar\'e inequality, then subminimizers are $p$-finely continuous in $A$
(see~\cite{BJan} or~\cite[Theorem~11.38]{BBbook}) and are upper semicontinuous
in $A$ (see~\cite{KinMa1} or~\cite[Theorem~8.22]{BBbook}). Recall that a
function is \emph{$p$-finely continuous} at $z\in A$ if it is continuous with
respect to the $p$-fine topology on $Y$. Here, a set $U\subset Y$ is
\emph{$p$-finely open} if $Y\setminus U$ is $p$-finely thin at each $x\in U$,
that is,
\begin{equation}\label{eq:p-fat}
\int_0^1\left(\frac{\text{cap}_p(B(x,\rho)\setminus U, B(x,2\rho))}
   {\text{cap}_p(B(x,\rho), B(x,2\rho))}\right)^{1/(p-1)}\, \frac{d\rho}{\rho}<\infty.
\end{equation}
Here, for $E\subset B(x,\rho)$, the quantity $\text{cap}_p(E, B(x,2\rho))$ is
the relative variational $p$-capacity of $E$ with respect to $B(x,2\rho)$ as
given in Definition~\ref{def:rel-cap}; see~\cite[Section~11.6]{BBbook}.
\begin{proposition}\label{prop:semicont}
Let $x\in\dOm$ and $r>0$ such that $f\ge0$ on $B(x,r)\cap\dOm$. Then $u$ is a
$p$-subminimizer on $B(x,r)\cap\overline{\Om}$, and hence is upper
semicontinuous at $x$, that is,
\[
u(x)\ge \limsup_{B(x,r)\cap\overline{\Om}\ni y\to x}u(y),
\]
and $u$ is $p$-finely continuous in $B(x,r)\cap\overline{\Om}$.
\end{proposition}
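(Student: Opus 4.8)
The plan is to show that $u$, identified on $\dOm$ with its trace $Tu$, is a $p$-subminimizer on the set $A:=B(x,r)\cap\overline{\Om}$, regarded as a relatively open subset of the metric measure space $\overline{\Om}$ with the restricted metric and measure $\mu\lfloor_{\overline{\Om}}$, and then to invoke the regularity theory for subminimizers recalled above.

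Before doing so I would record the structural facts that make this meaningful. Since $\mu(\dOm)=0$, integrals of $\mu$ over subsets of $\overline{\Om}$ agree with those over $\Om$; combined with the corkscrew condition \eqref{density} (extended to centers in $\dOm$ by a routine covering argument through interior corkscrew points), this shows that $\mu\lfloor_{\overline{\Om}}$ is doubling on $\overline{\Om}$, while the $p$-Poincar\'e inequality assumed for $\Om$ in Definition~\ref{Assume-Om} passes to $\overline{\Om}$, which is moreover complete. By the trace theory (Definition~\ref{defn:traces}, Proposition~\ref{pro:trace}; cf.~\cite{MalTr}), the extension of $u$ by $Tu$ lies in $N^{1,p}(\overline{\Om})$ with minimal $p$-weak upper gradient equal $\mu$-a.e.\ to $g_u$, so the notions of $p$-subminimizer, relative $p$-capacity, and $p$-fine topology used in the excerpt are available with $Y=\overline{\Om}$.

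For the subminimizer inequality, fix $\varphi\le 0$ in $N^{1,p}(\overline{\Om})$ with compact support in $A$. Since $\operatorname{supp}\varphi$ is a compact subset of the open ball $B(x,r)$, we may pick $r'<r$ with $\operatorname{supp}\varphi\subset B(x,r')$, so $T\varphi$ is supported in $\overline{B(x,r')}\cap\dOm\subset B(x,r)\cap\dOm$, where $f\ge 0$, and $T\varphi\le 0$ there; hence $\int_{\dOm}f\,T\varphi\,dP_\Om\le 0$. Now $u+\varphi$ is an admissible competitor for $I$, so from $I(u)\le I(u+\varphi)$ and $\int_{\dOm}f(u+\varphi)\,dP_\Om=\int_{\dOm}fu\,dP_\Om+\int_{\dOm}f\,T\varphi\,dP_\Om$ we get
\[
  \int_\Om g_u^p\,d\mu\le\int_\Om g_{u+\varphi}^p\,d\mu+\int_{\dOm}f\,T\varphi\,dP_\Om\le\int_\Om g_{u+\varphi}^p\,d\mu.
\]
By the locality of minimal upper gradients, $g_{u+\varphi}=g_u$ $\mu$-a.e.\ off $\operatorname{supp}\varphi$, so this is equivalent to $\int_A g_u^p\,d\mu\le\int_A g_{u+\varphi}^p\,d\mu$ — precisely the defining inequality of a $p$-subminimizer on $A$.

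It then remains to quote the known facts: on the complete doubling $p$-Poincar\'e space $\overline{\Om}$, a $p$-subminimizer on $A$ is upper semicontinuous in $A$ (\cite{KinMa1}, \cite[Theorem~8.22]{BBbook}), which at the point $x\in A$ reads $u(x)\ge\limsup_{A\ni y\to x}u(y)$, and is $p$-finely continuous in $A$ (\cite{BJan}, \cite[Theorem~11.38]{BBbook}). I expect the real work to sit in the structural first step — verifying that $\overline{\Om}$ with its inherited metric and measure is a legitimate ambient space for this nonlinear potential theory, and in particular that $u$ together with its trace is a Newtonian function on $\overline{\Om}$ with minimal upper gradient $g_u$, since this requires the $p$-Poincar\'e inequality and the curve-based upper gradient structure to survive the passage from $\Om$ to $\overline{\Om}$ across the $\mu$-null boundary $\dOm$. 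Once these points are in place, the argument above is routine.
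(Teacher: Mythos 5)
Your proposal is correct and takes essentially the same approach as the paper: both reduce the claim to showing that $u$ is a $p$-subminimizer on $B(x,r)\cap\overline\Om$, test the functional $I$ with $u+\varphi$ for non-positive $\varphi$ supported in the ball, discard the boundary term $\int_{\dOm}\varphi f\,dP_\Om\le 0$ using $f\ge 0$ and $\varphi\le 0$, localize via the locality of minimal upper gradients, and then invoke the same references (\cite{KinMa1}, \cite{BJan}, \cite{BBbook}) for upper semicontinuity and $p$-fine continuity of subminimizers. Your proof is somewhat more explicit about the structural facts --- that $\overline\Om$ with the inherited data is a complete doubling $p$-Poincar\'e space and that $u$ glued with $Tu$ belongs to $N^{1,p}(\overline\Om)$ with the same minimal upper gradient --- which the paper treats more tersely, but the argument is the same.
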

\begin{proof}
From our standing hypothesis that $\Om$ supports a $p$-Poincar\'e inequality
and that the restriction of $\mu$ to $\Om$ satisfies~\eqref{density}, we know
that $\overline{\Om}$, equipped with the inherited metric and the restriction
of $\mu$ to $\overline\Om$ is doubling and supports a $p$-Poincar\'e
inequality. Hence the results regarding $p$-subharmonic functions mentioned
above would yield the desired conclusions regarding $u$ provided we demonstrate
that $u$ is a $p$-subminimizer on $B(x,r)\cap\overline{\Om}$.

To this end, let $\varphi\in N^{1,p}(\overline{\Om})$ be a non-positive
function such that $\varphi=0$ on $\overline{\Om}\setminus B(x,r)$. With
$u+\varphi$ as a competitor, we know that $I(u)\le I(u+\varphi)$, that is,
\[
\int_\Om g_u^p\, d\mu+\int_{\dOm}uf\, dP_\Om
  \le \int_\Om g_{u+\varphi}^p\, d\mu+\int_{\dOm}(u+\varphi)f\, dP_\Om.
\]
It follows from $g_u=g_{u+\varphi}$ $\mu$-a.e.~in $\overline{\Om}\setminus
B(x,r)$ and from $\mu(\dOm)=0$ that
\[
\int_{B(x,r)\cap\overline{\Om}}g_u^p\, d\mu
\le \int_{B(x,r)\cap\overline{\Om}}g_{u+\varphi}^p\, d\mu
 +\int_{\dOm\cap B(x,r)}\varphi f\, dP_\Om.
\]
Because $f\ge 0$ on $\dOm\cap B(x,r)$ and $\varphi\le 0$ there, it follows that
\[
\int_{B(x,r)\cap\overline{\Om}}g_u^p\, d\mu
\le \int_{B(x,r)\cap\overline{\Om}}g_{u+\varphi}^p\, d\mu
\]
as desired.
\end{proof}
We next show that if $u$ is constant in a neighborhood of a point in the
boundary, then that point belongs to the natural boundary (that is, $f$
vanishes in a relative neighborhood of that point).
\begin{proposition}
Let $u$ be a $p$-harmonic solution to the Neumann boundary value problem on
$\Om$ with continuous boundary data $f$, and if $x\in\dOm$ and $r>0$ such that
$u$ is constant on $B(x,r)\cap\Om$, then $f=0$ on $B(x,r/2)$.
\end{proposition}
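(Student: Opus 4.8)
The plan is to test the minimality of $u$ against competitors $u+t\varphi$ with $\varphi$ supported in $B(x,r)$, let $t\to0^{+}$ in order to extract a condition on $f$ that is \emph{linear} in $\varphi$, and then to localize that condition pointwise using continuity of $f$.

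First I would note that, since $u$ is constant on the open set $B(x,r)\cap\Om$, locality of the minimal $p$-weak upper gradient gives $g_u=0$ $\mu$-a.e.\ there. Fix $\varphi\in N^{1,p}(\Om)$ whose support is a compact subset of $B(x,r)$, so that $\operatorname{supp}\varphi\subset B(x,r')$ for some $r'<r$, and let $t>0$. Then $u+t\varphi$ agrees with $u$ on the open set $\Om\setminus\overline{B(x,r')}$, whence $g_{u+t\varphi}=g_u$ $\mu$-a.e.\ there, while on $B(x,r)\cap\Om$ we have $u+t\varphi=(\text{const})+t\varphi$ and hence $g_{u+t\varphi}=t\,g_\varphi$ $\mu$-a.e.\ there. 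Inserting this into $I(u)\le I(u+t\varphi)$, cancelling the contributions from outside $B(x,r')$ and using linearity of the trace, I obtain
\[
  0\le t^{p}\int_{B(x,r')\cap\Om}g_\varphi^{p}\,d\mu+t\int_{\dOm}\varphi\,f\,dP_\Om .
\]
Dividing by $t$ and letting $t\to0^{+}$ (this is the step where $p>1$ is used) gives $\int_{\dOm}\varphi\,f\,dP_\Om\ge0$; applying the same with $-\varphi$ in place of $\varphi$ gives the reverse inequality. Hence $\int_{\dOm}\varphi\,f\,dP_\Om=0$ for every $\varphi\in N^{1,p}(\Om)$ with compact support in $B(x,r)$.

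Next I would specialize to Lipschitz cut-offs. A bounded Lipschitz function $\varphi$ on $X$ lies in $N^{1,p}(\Om)$ because $\Om$ is bounded, and its trace equals its boundary restriction: continuity of $\varphi$ together with~\eqref{density} gives $\fint_{B(z,\rho)\cap\Om}|\varphi-\varphi(z)|\,d\mu\to0$ as $\rho\to0^{+}$ for every $z\in\dOm$, so $T\varphi=\varphi|_{\dOm}$. Thus the identity above reads $\int_{\dOm\cap B(x,r)}\varphi\,f\,dP_\Om=0$ for all such $\varphi$. Finally, suppose for contradiction that $f(z_0)\ne0$ for some $z_0\in\dOm\cap B(x,r/2)$; without loss of generality $f(z_0)>0$. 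By continuity of $f$ there are $\rho\in(0,r/2)$ and $\delta>0$ with $f>\delta$ on $\dOm\cap B(z_0,\rho)$, and $B(z_0,\rho)\subset B(x,r)$ since $z_0\in B(x,r/2)$ and $\rho<r/2$. Taking a Lipschitz cut-off $\varphi$ with $0\le\varphi\le1$, $\varphi\equiv1$ on $B(z_0,\rho/2)$ and $\varphi\equiv0$ off $B(z_0,\rho)$, I get
\[
  0=\int_{\dOm}\varphi\,f\,dP_\Om\ \ge\ \delta\int_{\dOm\cap B(z_0,\rho/2)}\varphi\,dP_\Om
   =\delta\,P_\Om\bigl(\dOm\cap B(z_0,\rho/2)\bigr)\ >\ 0,
\]
the last strict inequality being the lower bound in~\eqref{boundary-Ahlfors-regularity}, a contradiction. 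The case $f(z_0)<0$ is symmetric (replace $f$ by $-f$). Therefore $f\equiv0$ on $\dOm\cap B(x,r/2)$.

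The only genuinely delicate point is the first step: one must be certain that the boundary term enters the comparison only to first order in $t$, while the Dirichlet term enters to order $t^{p}$, so that passing $t\to0^{+}$ isolates $\int_{\dOm}\varphi\,f\,dP_\Om$. This rests on locality of minimal upper gradients on the set where $u$ is constant and on linearity of the trace operator; everything after that is routine, using only the standing assumptions~\eqref{density} and~\eqref{boundary-Ahlfors-regularity}.
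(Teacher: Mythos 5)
Your proof is correct and reaches the conclusion by a cleaner route than the paper, though the underlying mechanism is the same. The paper works with the single concrete competitor $v = u - \eta_{r/2,r}(u-k)_+$, which — because $u$ is the constant $M$ on $B(x,r)\cap\Om$ — equals $u - (M-k)\eta$ there, and then lets $k\to M^-$; the scaling $(M-k)^{p-1}\to 0$ plays exactly the role that $t^{p-1}\to 0$ plays for you. Your version abstracts this into a genuine first-variation argument: you perturb by an arbitrary compactly supported test function, observe that the Dirichlet term enters at order $t^{p}$ (since $g_u=0$ on $B(x,r)\cap\Om$) while the boundary term enters linearly, send $t\to 0^+$ to obtain the duality statement $\int_{\dOm}\varphi f\,dP_\Om=0$ for all such $\varphi$ (so $f\,dP_\Om$ vanishes as a measure on $B(x,r)$), and only then invoke continuity of $f$ together with the lower bound in~\eqref{boundary-Ahlfors-regularity} to conclude $f\equiv 0$ pointwise. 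What your formulation buys is a cleaner separation of the variational step from the localization step, and it makes transparent that the hypotheses used are precisely locality of minimal upper gradients, linearity of the trace, and $p>1$; the paper's version avoids the duality detour and argues pointwise at $x$, at the cost of the somewhat opaque manipulation involving $\frac{M-v}{M-k}$. Both reductions to the case of a single boundary point (you via a covering argument with small balls $B(z_0,\rho)$, the paper via the remark that it suffices to show $f(x)=0$ for each admissible $x$) are valid.
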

\begin{proof}
It suffices to show that for each such $x$ and $r>0$ we have $f(x)=0$. Suppose
that $f(x)> 0$ (by replacing $f$ with $-f$ and $u$ with $-u$ if necessary).
Then for sufficiently small $r>0$ we have in addition to $u$ being constant on
$B(x,r)\cap\Om$ that $f>0$ on $B(x,r)\cap\dOm$.

Let the constant value of $u$ on $B(x,r)\cap\Om$ be $M$. For $k\in\RR$ with
$k<M$, with the choice of $v=u-\eta_{r/2,r}(u-k)_+$ as
in~\eqref{eq:gv_estimate} that
\begin{align*}
M\int_{\dOm\cap B(x,r)}f\, dP_\Om&=
\int_{\Om\cap B(x,r)}g_u^p\, d\mu+\int_{\dOm\cap B(x,r)}uf\, d\mu\\
&\le \int_{\Om\cap B(x,r)}g_v^p\, d\mu+\int_{\dOm\cap B(x,r)}vf\, dP_\Om.
\end{align*}

Since $g_v\le (1-\eta)g_{u-k}+\tfrac{2}{r}(u-k)_+=\tfrac{2}{r}(u-k)_+$ on
$B(x,r)\setminus B(x,r/2)$ $\mu$-a.e., it follows that
\[
\int_{\dOm\cap B(x,r)}M\, f\, dP_\Om \le \frac{2^p}{r^p}(M-k)^p\mu(B(x,r)\setminus B(x,r/2))
  +\int_{\dOm\cap B(x,r)}vf\, dP_\Om.
\]
Thus
\begin{align*}
\int_{\dOm\cap B(x,r)}(M-v)f\, dP_\Om
&\le \frac{2^p(M-k)^p}{r^p}\mu([B(x,r)\setminus B(x,r/2)]\cap\Om)\\
&\le \frac{2^p}{r^p}\mu(B(x,r))(M-k)^p.
\end{align*}
Since $v\le M$, and as $\lim_{k\to M^-}\frac{M-v}{M-k}=1$ on
$B(x,r/2)\cap\dOm$, we have
\begin{align*}
\int_{\dOm\cap B(x,r/2)}\frac{M-v}{M-k}\, f\, dP_\Om
  &\le \frac{2^p(M-k)^{p-1}}{r^p}\mu([B(x,r)\setminus B(x,r/2)]\cap\Om)\\
	&\le \frac{2^p}{r^p}\mu(B(x,r))(M-k)^{p-1},
\end{align*}
and letting $k\to M^-$ we obtain
\[
0\le \int_{\dOm\cap B(x,r/2)}\, f\, dP_\Om\le 0.
\]
Here, we used the fact that $p>1$.
\end{proof}
As a consequence of the above proposition, we know that if the boundary data
$f$ is not constant (equivalently, not the zero function), then $u$ is not
constant on $\Om$. This agrees with our intuitive understanding of the boundary
data $f$ controlling the ``outer normal derivative" of $u$ at $\partial\Om$ ---
if the derivative cannot vanish on the boundary, then the function cannot be
constant. This is in spite of the fact that we do not have analogous
differential equation in the metric setting.
\section{Boundary continuity for $p$ close to $1$ or the natural dimension $s$ when $\mu$ is Ahlfors $s$-regular at small scales}
\label{sec:cont}
In this section we need the strong version~\eqref{eq:Ahlfors-strong}. We
therefore assume from now on that $\mu$ is Ahlfors $s$-regular at scale
$r_0>0$.

Recall that the exponents $\alpha$ and $\beta$ used in Section~5 to prove
boundedness of the solution $u$ depend on $p$ and the exponent $s$
from~\eqref{eq:mu-dimension}. When $p$ is close to either $1$ or $s$, then it
is possible to find values of $\alpha$ and $\beta$ such that
\begin{equation}\label{eq:condition-on-p}
\alpha+\frac1p-1>0\qquad \text{and}\qquad \beta+\frac1p-1>0.
\end{equation}
The above conditions are satisfied whenever $p^2 - sp + s>0$. In particular,
they allow for all $p>1$ if the dimension $s<4$. In this section we will show
that when $p$ satisfies~\eqref{eq:condition-on-p}, the function $u$ is
continuous up to the boundary of $\Om$.
\begin{theorem}\label{thm:Main2}
Suppose that $\mu$ is Ahlfors $s$-regular at scale $r_0>0$. Under the standard
assumptions on $\Om$ and $\mu$, if $f:\dOm\to\RR$ is a bounded Borel measurable
function on $\dOm$,  $x\in\dOm$, and $r_0>0$ such that $f\ge 0$ on
$B(x,r_0)\cap\dOm$ or $f\le 0$ on $B(x,r_0)\cap\dOm$, then $u$ is continuous at
$x$ relative to $\overline{\Om}$.
\end{theorem}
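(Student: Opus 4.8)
The plan is to reduce at the outset to the case $f\ge0$ on $B(x,r_0)\cap\dOm$; the case $f\le0$ follows verbatim after replacing $u,f$ by $-u,-f$, which preserves all the hypotheses. By Theorem~\ref{thm:bounded} the function $u$ is bounded on $\overline\Om\cap B(x,R)$ for all small $R$, and by Proposition~\ref{prop:semicont} it is a $p$-subminimizer on $B(x,r_0)\cap\overline\Om$, hence upper semicontinuous and $p$-finely continuous there. In particular $u(x)$ is well defined, $\limsup_{\overline\Om\ni y\to x}u(y)\le u(x)$, and --- as $\mu(\dOm)=0$ and near $x$ one may work with the $p$-finely continuous representative --- the genuine oscillation of $u$ over $\overline\Om\cap B(x,\rho)$ is controlled by
\[
  \omega(\rho)\coloneq\operatorname*{ess\,sup}_{B(x,\rho)\cap\Om}u-\operatorname*{ess\,inf}_{B(x,\rho)\cap\Om}u .
\]
It therefore suffices to show $\omega(\rho)\to0$ as $\rho\to0^{+}$: then $u$ has a limit along $\overline\Om$ at $x$, and upper semicontinuity together with $p$-fine continuity forces that limit to be $u(x)=Tu(x)$.

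The core is the oscillation-decay estimate: there exist structural constants $c\in(0,1)$, $\theta>0$ and $C$ with $\omega(R/4)\le(1-c)\,\omega(R)+C\,R^{\theta}$ for all small $R$. I would prove it by the De Giorgi scheme. Fix small $R$, put $M=\operatorname*{ess\,sup}_{B(x,R)\cap\Om}u$ and $m=\operatorname*{ess\,inf}_{B(x,R)\cap\Om}u$; by the usual dichotomy --- replacing $u$ by $-u$ in this step only if necessary, which is compatible with the sign-free estimates~\eqref{eq:g(u-k)+est}, \eqref{eq:Ahlfors-strong} and~\eqref{eq:ukr-uhR,psikr-psihR} used below --- assume $\{u>\tfrac{M+m}2\}$ occupies at most half of $B(x,R/2)$ in measure. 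First run a measure-shrinking iteration through the levels $k_j=M-2^{-j-1}\omega(R)$, based on the Caccioppoli estimate~\eqref{eq:g(u-k)+est} and on~\eqref{eq:ukr1} (whose Sobolev gain is $\alpha/p$), with the boundary contribution of $f$ estimated through Proposition~\ref{pro:trace}; since $\mu$ is Ahlfors $s$-regular this contribution is of order $C_f R^{1-1/p}$, hence negligible, so for any prescribed $\varepsilon_0>0$ one reaches an index $j_0=j_0(\varepsilon_0,\text{data})$ with $\mu(\{u>k_{j_0}\}\cap B(x,R/2))\le\varepsilon_0\,\mu(B(x,R/2))$. With $\varepsilon_0$ chosen small enough, run the coupled iteration of Lemma~\ref{lem:decay-est} --- its Ahlfors version~\eqref{eq:Ahlfors-strong} together with the first inequality of~\eqref{eq:ukr-uhR,psikr-psihR} --- inside $B(x,R/2)$ starting from $k_{j_0}$, just as in the proof of Theorem~\ref{thm:bounded}, but tracking the size of the admissible cutoff $d$ furnished by~\eqref{eq:d-est1}--\eqref{eq:d-est2}: here the hypotheses~\eqref{eq:condition-on-p} on $\alpha$ and $\beta$ are precisely what allows the iteration to close with $d$ no larger than a small multiple of $\omega(R)$ plus a term $C\,C_f R^{\theta}$, $\theta>0$ (the Ahlfors bounds carrying only positive powers of $R$), rather than merely a finite quantity as in the boundedness argument. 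Since then $u\le k_{j_0}+d$ a.e.\ on $B(x,R/4)\cap\Om$, while $m(R/4)\ge m(R)$, the oscillation-decay estimate follows.

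Granting this estimate, apply it along the radii $R_n=4^{-n}R_1$ for small $R_1$: an elementary induction --- with the standard logarithmic correction in the borderline case $1-c=4^{-\theta}$ --- gives $\omega(R_n)\to0$, whence $\omega(\rho)\to0$ by monotonicity of $\omega$. This proves the theorem; for $f\le0$ near $x$ one runs the same argument with $-u,-f$ in place of $u,f$, whose corresponding quantities $u(k,\cdot)$ and $\psi(k,\cdot)$ obey identical recursions because only $|f|$ enters Theorem~\ref{thm:DeGiorgi} and Lemma~\ref{lem:decay-est}.

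The only genuinely delicate point lies in the second paragraph: first, keeping the $f$-contribution to every step of the De Giorgi scheme of the form $C_f R^{\theta}$ with $\theta>0$, which is exactly where Ahlfors $s$-regularity and the clean estimate~\eqref{eq:Ahlfors-strong} --- free of negative powers of $R$ --- are indispensable; and second, verifying that the coupled iteration of Lemma~\ref{lem:decay-est} can then be closed with $d$ comparable to the oscillation plus that error, which is precisely what $\alpha+\tfrac1p-1>0$ and $\beta+\tfrac1p-1>0$, equivalently $p^2-sp+s>0$, are designed to guarantee. Everything else is bookkeeping built on Theorems~\ref{thm:bounded} and~\ref{thm:DeGiorgi}, Lemma~\ref{lem:decay-est}, and Proposition~\ref{prop:semicont}.
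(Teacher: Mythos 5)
The central gap in your proposal is the ``measure-shrinking iteration through the levels $k_j = M - 2^{-j-1}\omega(R)$''. You assert that the Caccioppoli estimate~\eqref{eq:g(u-k)+est} plus the Sobolev gain in~\eqref{eq:ukr1} deliver, for any prescribed $\eps_0>0$, a finite $j_0$ with $\mu(\{u>k_{j_0}\}\cap B(x,R/2))\le\eps_0\,\mu(B(x,R/2))$, with the boundary contribution of $f$ being ``negligible''. This step is not established by the paper and is not verified in your sketch, and it is precisely the step that the paper deliberately avoids. The classical De~Giorgi measure-shrinking lemma relies on an isoperimetric inequality (Giusti, Lemma~7.2), which has no clean analogue when the Poincar\'e exponent is $p>1$ rather than $1$: applying the $(1,p)$-Poincar\'e inequality to the truncation $\max(\min(u,k_j),k_{j-1})$ produces $\bigl(\int_{\{k_{j-1}<u<k_j\}}g_u^p\bigr)^{1/p}$ rather than $(a_{j-1}-a_j)^{1-1/p}\bigl(\int g_u^p\bigr)^{1/p}$, so the gain $(a_{j-1}-a_j)$ that makes the summation work is lost. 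Moreover, even granting an isoperimetric lemma for $\Om\cap B$, when you feed the Caccioppoli estimate at level $k_{j-1}$ the interior term scales like $2^{-jp}\omega(R)^p$ while the boundary term scales only like $2^{-j}\omega(R)$; dividing out $(k_j-k_{j-1})\sim 2^{-j}\omega(R)$ leaves a factor $2^{j(p-1)}$ multiplying the $f$-contribution, so the boundary term is not uniformly small over the iteration. You neither exhibit the isoperimetric lemma nor control this growth, and the coupled quantity $\psi$ is untouched by your measure-shrinking step.

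The paper's own route to the initial density smallness is entirely different and is where the sign hypothesis on $f$ does real work: with (say) $f\le 0$ near $x$, $u$ is a $p$-superminimizer there, hence $p$-finely continuous at $x$ with fine limit $Tu(x)=m$ (Proposition~\ref{prop:semicont}); then~\eqref{eq:p-fat} together with the capacity-versus-measure estimate~\eqref{eq:rel-cap-Ball} yields $\mu(A(\kappa_\nu,R))/\mu(B(x,R)\cap\Om)\to 0$ as $R\to 0^+$, and this furnishes the smallness $\le(4D)^{-p}$ needed to launch the iteration~\eqref{eq:mod-est}. You cite fine continuity only to ``work with the good representative'', missing that it is the very source of the density decay. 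The consequence of this omission is telling: your second paragraph is ``sign-free'' (you even propose to swap $u\mapsto -u$ in the dichotomy), so if it were correct it would give $\omega(\rho)\to 0$ at \emph{every} boundary point regardless of the sign of $f$, whereas the paper explicitly records that its method yields nothing at sign-changing points and that Theorem~\ref{thm:Main3} only gives $\mathcal H$-a.e.\ continuity. That mismatch is a strong signal that the measure-shrinking step cannot be made to close without an extra hypothesis, and the sign condition via $p$-fine continuity is exactly the mechanism the paper uses to close it.

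The remaining bookkeeping in your sketch --- the modified choice of $d$ informed by~\eqref{eq:condition-on-p}, the two-case oscillation recursion $\omega(R/4)\le(1-c)\omega(R)+CR^\theta$, the use of~\eqref{eq:Ahlfors-strong} to avoid negative powers of $R$, and the identification of the limit with $Tu(x)$ via semicontinuity --- is faithful to the paper's Lemma~\ref{lem:decay-est} and the argument following~\eqref{eq:Conditions-sigma-tau-d}. But without the fine-continuity density estimate (or an honest, boundary-adapted De~Giorgi isoperimetric lemma with the $f$-term tracked), the decay estimate is not established and the proof does not go through.
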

\begin{proof}
Without loss of generality we may assume that $f\le 0$ on $\dOm\cap B(x,r_0)$,
for if $f\ge 0$ at each point in $\dOm\cap B(x,r_0)$ then we apply the
following analysis to $-u$, which is a solution for the boundary data $-f$.

Suppose that $u$ is not continuous at $x$.  For $R>0$ we set
\[
 M(R)\coloneq \sup_{y\in B(x,R)\cap\Om}u(y)\qquad\text{and}\qquad m(R)\coloneq \inf_{y\in B(x,R)\cap\Om}u(y).
\]
Then by assumption we have that $\lim_{R\to0^+}M(R)\eqcolon M>\lim_{R\to0^+}m(r)\eqcolon m$.

For $0<R<\min\{1,r_0\}$, $k_0\in\RR$ with $k_0<M(R)$, and for $n\in\NN$ we set
$r_n=(1+2^{-n})R/2$ and $k_n=k_0+d(1-2^{-n})$, where we want to choose $d>0$
such that we have $u\le k_0+d$ on $B(x,R/2)\cap\Om$. In other words, we repeat
the proof of boundedness of $u$, but now we modify the choice of $d$ by
modifying~\eqref{eq:ukn,psikn-ind}. As in Lemma~\ref{lem:decay-est}, we set
\[
u(k, r) =\biggl(\fint_{\Om\cap B(x,r)} (u-k)_+^p\,d\mu\biggr)^{1/p}\quad \text{and}\quad
\psi(k, R) =\fint_{\dOm\cap B(x,R)} (u-k)_+\,dP_\Om.
\]
Suppose that $k_0\in\RR$ such that
\[
  \frac{\mu(A(k_0,R))}{\mu(B(x,R)\cap\Om)}\le \frac{1}{(4D)^p},
\]
then we wish to show that there exist $\sigma,\tau>0$ such that for each
$n\in\NN$,
\begin{equation}\label{eq:mod-est}
u(k_n,r_n)\le \frac{2^{-\sigma n}(M(R)-k_0)}{4D}\qquad \text{and}\qquad
\psi(k_n,r_n)\le 2^{-\tau n}(M(R)-k_0).
\end{equation}
Here in the above, we just replaced $4C[1+C_f]$ with $C$, and we remind the
reader that we are not particularly concerned with the precise value of the
constants $C$ as long as they are independent of $R$. This holds when $n=0$.
Suppose we know that the above holds for some non-negative integer $n$. Observe
that by Theorem~\ref{thm:bounded} we have $|M(R)|<\infty$ and $|m(R)|<\infty$.
By~\eqref{eq:ukr-uhR,psikr-psihR} of Lemma~\ref{lem:decay-est} we have
\begin{align*}
&u(k_{n+1},r_{n+1})\le C\left[\frac{u(k_n,r_n)}{k_{n+1}-k_n}\right]^\alpha
   \left[\frac{r_n}{r_n-r_{n+1}}u(k_n,r_n)+r_n^{1-1/p}\psi(k_n,r_n)^{1/p}\right]\\
   &\qquad\le C\left[\frac{2^{-n(\sigma-1)}(M(R)-k_0)}{4Dd}\right]^\alpha
    \left[\frac{(M(R)-k_0)}{4D\, 2^{n(\sigma-1)}}
      +\frac{R^{1-1/p}(M(R)-k_0)^{1/p}}{2^{\tau n/p}}\right]
\end{align*}
and by~\eqref{eq:Ahlfors-strong},
\begin{align*}
&\psi(k_{n+1},r_{n+1})\le C\left[\frac{\psi(k_n,r_n)}{k_{n+1}-k_n}\right]^\beta
   \left[\frac{r_n}{r_n-r_{n+1}}u(k_n,r_n)+r_n^{1-1/p}\psi(k_n,r_n)^{1/p}\right]\\
   &\qquad\le C\left[\frac{2^{-n(\tau-1)}(M(R)-k_0)}{d}\right]^\beta
    \left[\frac{(M(R)-k_0)}{4D\, 2^{n(\sigma-1)}}
      +\frac{R^{1-1/p}(M(R)-k_0)^{1/p}}{2^{\tau n/p}}\right].
\end{align*}
Therefore~\eqref{eq:mod-est} would hold for $n+1$ if we can ensure that
\begin{align*}
C\left[\frac{2^{-n(\sigma-1)}(M(R)-k_0)}{4Dd}\right]^\alpha &
    \left[\frac{(M(R)-k_0)}{4D\, 2^{n(\sigma-1)}}
     +\frac{R^{1-1/p}(M(R)-k_0)^{1/p}}{2^{\tau n/p}}\right]\\
    &\le \frac{2^{-\sigma(n+1)}(M(R)-k_0)}{4D},
\end{align*}
and
\begin{align*}
C\left[\frac{2^{-n(\tau-1)}(M(R)-k_0)}{d}\right]^\beta&
    \left[\frac{(M(R)-k_0)}{4D\, 2^{n(\sigma-1)}}
    +\frac{R^{1-1/p}(M(R)-k_0)^{1/p}}{2^{\tau n/p}}\right]\\
     &\le 2^{-\tau(n+1)}(M(R)-k_0).
\end{align*}
The above two inequalities are satisfied if we can guarantee that
\begin{align}\label{eq:Conditions-sigma-tau-d}
\sigma&\ge \frac{\alpha+1}{\alpha},\notag\\ \tau&\ge p[\sigma(1-\alpha)+\alpha],\notag\\
\tau&\ge \frac{\beta}{\beta+\frac1p-1},\notag\\ \tau&\le \frac{\sigma-(1+\beta)}{1-\beta},\notag \\
d&\ge \max\{C^{1/\alpha},C^{1/\beta}\}\frac{(M(R)-k_0)}{4D}, \\
d&\ge C^{1/\alpha}\left[(M(R)-k_0)^{\alpha+\frac1p-1}R^{1-1/p}(4D)^{1-\alpha}\right]^{1/\alpha},\notag\\
d&\ge C^{1/\beta}\left[(M(R)-k_0)^{\beta+\frac1p-1}R^{1-1/p}\right]^{1/\beta}.\notag
\end{align}
In the above, we choose $D>1$ such that
\[
D\ge \max\{C^{1/\alpha},C^{1/\beta}\}.
\]
Given the assumptions~\eqref{eq:condition-on-p} on $p$, the above are
guaranteed by the choices of $\sigma$, $\tau$, and $d$ such that
\begin{align}\label{eq:choices-sigma-tau}
\max\bigg\lbrace\frac{\alpha+1}{\alpha}, 1+\beta+\frac{\beta(1-\beta)}{\beta+\frac1p-1},
  \frac{1+\beta}{1-p(1-\alpha)(1-\beta)}\bigg\rbrace&= \sigma,\notag\\
\max\bigg\lbrace\frac{\beta}{\beta+\frac1p-1}, p[\sigma-(\sigma-1)\alpha]\bigg\rbrace&\le \tau\le \frac{\sigma-(1+\beta)}{1-\beta},
\end{align}
and it suffices to choose $d$ as follows:
\begin{align}\label{eq:choice-d}
\max\biggl\lbrace \frac{(M(R)-k_0)}{4}, C\Bigl[R^{1-1/p}(M(R)&-k_0)^{\alpha+\frac1p-1}\Bigr]^{1/\alpha},\notag\\
  C\Bigl[R^{1-1/p}&(M(R)-k_0)^{\beta+\frac1p-1}\Bigr]^{1/\beta}\biggr\rbrace = d.
\end{align}
The above choice of $\tau$ is possible because of the
assumptions~\eqref{eq:condition-on-p} on $p$. Thus given $k_0<M(R)$ we have the
above choice of $d$, $\sigma$, and $\tau$ such that, by letting $n\to\infty$
in~\eqref{eq:mod-est}, we can conclude that $u\le k_0+d$ on $B(x,R/2)\cap\Om$.

We only consider $0<R<\max\{1,r_0\}$ for which
\[
0<M-m\le M(R)-m(R)\le 2(M-m).
\]

Finally, for $\nu\in\NN$ set $\kappa_\nu=M(R)-2^{-\nu-1}(M(R)-m(R))$. By
Proposition~\ref{prop:semicont}, $u$ is lower semicontinuous at $x$, and so
$m=Tu(x)$. Furthermore, by this proposition we have that $u$ is finely
continuous at $x$, and so by~\eqref{eq:p-fat} together
with~\cite[Proposition~6.16]{BBbook} (see~\eqref{eq:rel-cap-Ball}), $\lim_{R\to
0^+}\frac{\mu(A(\kappa_\nu,R))}{\mu(B(x,R)\cap\Om)}=0$ for sufficiently large
$\nu$. Fix such $\nu\ge 3$ and we further restrict $R$ for which
\[
\frac{\mu(A(\kappa_\nu,r))}{\mu(B(x,r)\cap\Om)}\le \frac{1}{(4D)^p}
\]
whenever $0<r\le R$. Then by the above, with $\kappa_\nu$ playing the role of
$k_0$, we have $M(R)-\kappa_\nu=2^{-(\nu+1)}(M(R)-m(R))$, and so
\begin{align*}
M(R/2)-m(R/2)\le \kappa_\nu-m(R/2)+d
    &\le \kappa_\nu-m(R)+d\\
    &= [1-2^{-(\nu+1)}](M(R)-m(R))+d.
\end{align*}
We further restrict $R$ so that
\begin{equation}\label{eq:R-rest1}
\lambda_1\coloneq 1-2^{-(\nu+1)}+\frac{CR^{(1-1/p)/\alpha}2^{-(\nu+1)\widehat{\alpha}}}{(M-m)^{1-\widehat{\alpha}}}<1
\end{equation}
and
\begin{equation}\label{eq:R-rest2}
\lambda_2\coloneq 1-2^{-(\nu+1)}+\frac{CR^{(1-1/p)/\beta}2^{-(\nu+1)\widehat{\beta}}}{(M-m)^{1-\widehat{\beta}}}<1.
\end{equation}
Here,
\[
  \widehat{\alpha}=\left[\alpha+\frac1p-1\right]/\alpha<1,\qquad\widehat{\beta}=\left[\beta+\frac1p-1\right]/\beta<1.
\]
If $d=\tfrac14(M(R)-\kappa_\nu)$, then  we see by the choice of $\nu\ge 3$ as
outlined above that
\begin{align}\label{eq:Est1}
M(R/2)-m(R/2)&\le [1-2^{-(\nu+1)}](M(R)-m(R))+\frac{2^{-(\nu+1)}}{4}(M(R)-m(R))\notag\\
  &\le [1-2^{-(\nu+2)}](M(R)-m(R)).
\end{align}
If $d=C\left[R^{1-1/p}(M(R)-k_0)^{\alpha+\frac1p-1}\right]^{1/\alpha}$, then by
the restriction~\eqref{eq:R-rest1} we have from $M(R)-m(R)\approx M-m$ that
\begin{align}\label{eq:Est2}
M(R/2)-m(R/2)&\le [1-2^{-(\nu+1)}](M(R)-m(R))\notag\\
 &\qquad\quad +\frac{CR^{(1-1/p)/\alpha}2^{-(\nu+1)\widehat{\alpha}}}{(M-m)^{1-\widehat{\alpha}}}(M(R)-m(R))\notag\\
 &\le \lambda_1 (M(R)-m(R)).
\end{align}
If $d=C\left[R^{1-1/p}(M(R)-k_0)^{\beta+\frac1p-1}\right]^{1/\beta}$, then
similarly we obtain
\begin{equation}\label{eq:Est3}
M(R/2)-m(R/2)\le \lambda_2(M(R)-m(R)).
\end{equation}
Combining~\eqref{eq:Est1}, \eqref{eq:Est2}, and~\eqref{eq:Est3}, setting
\[
\lambda=\max\{1-2^{-(\nu+2)}, \lambda_1,\lambda_2\},
\]
and noting that $0<\lambda<1$, we obtain in all three cases that for all small
$R>0$,
\[
M(R/2)-m(R/2)\le \lambda (M(R)-m(R)).
\]
An iterated application of the above tells us that
\[
M(r)-m(r)\le 2^{1+\theta_0}\left(\frac{r}{R}\right)^{\theta_0}[M-m]
\]
for all $0<r<R$, where $\theta_0=\log_2(1/\lambda)$. It follows that $u$ must
have $\theta_0$-H\"older continuous decay to $Tu(x)$ at $x$, which contradicts
our assumption that $u$ is \emph{not} continuous at $x$.

Thus we conclude that $u$ must be continuous at $x$ from $\Om$, that is,
\[
\lim_{\Om\ni y\to x}u(y)=Tu(x).
\]
This holds for each $x\in\dOm\cap B(y,r)$ on which $f$ does not change sign.
Since $Tu$ is the trace of $u$ on $\dOm$, it follows that $u$ is continuous at
$x$ relative to $\overline{\Om}$.
\end{proof}
Note that the above proof does not permit us to conclude that $u$ must be
H\"older continuous at the boundary point $x$. From the work of~\cite{Cr, KLS}
we know that in the Euclidean setting, with $\Om$ a bounded smooth domain, $u$
is H\"older continuous at the boundary. As far as we know, this remains open in
the metric setting.

The above proof does not permit us to draw any conclusions at boundary points
where $f$ changes sign. On the other hand, an analysis of the proof above shows
that if there is some $\xi\in [m(R), M(R)]$ for which
\[
 \lim_{r\to0^+} \fint_{B(x,r)\cap\Om}|u-\xi|\, d\mu =0,
\]
then when $\lim_{r\to0^+}M(r)=M>m=\lim_{r\to0^+}m(r)$, we must have either
\[
 \lim_{r\to 0^+}\frac{\mu(\{u>(m+M)/2\}\cap B(x,r)\cap\Om)}{\mu(B(x,r)\cap\Om)}=0
\]
or
\[
 \lim_{r\to 0^+}\frac{\mu(\{u<(m+M)/2\}\cap B(x,r)\cap\Om)}{\mu(B(x,r)\cap\Om)}=0.
\]
By considering $u$ in the first case and $-u$ in the second case, for
sufficiently large $\nu$, with $\kappa_\nu=M(R)-2^{-(\nu+1)}[M(R)-m(R)]$ we
have
\[
\lim_{r\to0^+}\frac{\mu(A(\kappa_\nu,r))}{\mu(B(x,r)\cap\Om)}=0,
\]
and so the proof of Theorem~\ref{thm:Main2} will show that $u$ has to be
continuous at $x$. Note that here we will obtain that $\xi=Tu(x)$. By the
definition of the trace function $Tu$, we have
\[
 \lim_{r\to0^+} \fint_{B(x,r)\cap\Om}|u-Tu(x)|\, d\mu =0 \quad\text{for $\mathcal{H}$-a.e.~$x\in\dOm$.}
\]
Thus, we have the following theorem.
\begin{theorem}\label{thm:Main3}
Under the standard assumptions on $\Om$ and $\mu$, if $f:\dOm\to\RR$ is a
bounded Borel measurable function on $\dOm$,  then for $\mathcal{H}$-almost
every $x\in\dOm$, $u$ is continuous at $x$ relative to $\overline{\Om}$.
\end{theorem}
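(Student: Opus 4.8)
The plan is to reduce matters to the oscillation-decay machinery already built in the proof of Theorem~\ref{thm:Main2}, feeding it a density input which, in the present generality (no sign hypothesis on $f$), will come from the trace theorem rather than from fine continuity. First I recall from Definition~\ref{defn:traces} that for $\mathcal{H}$-almost every $x\in\dOm$ there is a value $Tu(x)\in\RR$ with
\[
  \lim_{r\to 0^+}\fint_{B(x,r)\cap\Om}|u-Tu(x)|\,d\mu=0,
\]
and it suffices to prove that $u$ is continuous at $\overline{\Om}$ at each such $x$. Fix one and argue by contradiction: suppose $u$ is not continuous at $x$ relative to $\overline{\Om}$, so that, with $M(R)=\sup_{B(x,R)\cap\Om}u$ and $m(R)=\inf_{B(x,R)\cap\Om}u$, we have $M\coloneq\lim_{R\to 0^+}M(R)>\lim_{R\to 0^+}m(R)\eqcolon m$.

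The second step extracts a one-sided density statement. Writing $\xi=Tu(x)$, the Chebyshev inequality applied to $|u-\xi|$ together with the measure-density condition \eqref{density} (which gives $\mu(B(x,r)\cap\Om)\approx\mu(B(x,r))$) shows that for every $\delta>0$,
\[
  \lim_{r\to 0^+}\frac{\mu(\{|u-\xi|>\delta\}\cap B(x,r)\cap\Om)}{\mu(B(x,r)\cap\Om)}=0.
\]
Since $M>m$, we cannot have both $\xi=M$ and $\xi=m$, so for a suitably small fixed $\delta$ at least one of the sets $\{u>\xi+\delta\}$, $\{u<\xi-\delta\}$ has $\mu$-density zero at $x$ relative to $\Om$. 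After possibly replacing $u$ by $-u$ --- the $p$-harmonic solution for the boundary data $-f$ --- and choosing the level so that it stays strictly between $m$ and $M$, we may assume there is $t\in(m,M)$ with $\lim_{r\to 0^+}\mu(\{u>t\}\cap B(x,r)\cap\Om)/\mu(B(x,r)\cap\Om)=0$. Fixing $\nu$ large enough that $\kappa_\nu\coloneq M(R)-2^{-\nu-1}(M(R)-m(R))\ge t$ for all small $R$, we obtain
\[
  \lim_{r\to 0^+}\frac{\mu(A(\kappa_\nu,r))}{\mu(B(x,r)\cap\Om)}=0,
\]
which is precisely the input that powered the oscillation estimate in Theorem~\ref{thm:Main2}.

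With this in hand I rerun, essentially verbatim, the argument from the proof of Theorem~\ref{thm:Main2}: the recursive inequalities \eqref{eq:ukr-uhR,psikr-psihR} together with the strong estimate \eqref{eq:Ahlfors-strong} (available since $\mu$ is Ahlfors $s$-regular at scale $r_0$), and the De Giorgi choice of exponents $\sigma,\tau$ and truncation parameter $d$, yield $u\le\kappa_\nu+d$ on $B(x,R/2)\cap\Om$ with $d$ controlled by $M(R)-m(R)$ and a power of $R$; combining the three competing sizes of $d$ gives
\[
  M(R/2)-m(R/2)\le\lambda\,(M(R)-m(R))
\]
for a fixed $\lambda\in(0,1)$ and all sufficiently small $R$. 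The sign of $f$ plays no role in this portion of the argument in Theorem~\ref{thm:Main2}; its only use there was via Proposition~\ref{prop:semicont} to obtain the density-zero statement, which we have now supplied from the trace property instead. Iterating the displayed oscillation inequality forces $M(r)-m(r)\to 0$ as $r\to 0^+$, contradicting $M>m$. Therefore $u$ is continuous at $x$ relative to $\overline{\Om}$, and since $\mathcal{H}$-almost every point of $\dOm$ is such an $x$, Theorem~\ref{thm:Main3} follows.

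I expect the main obstacle to be the bookkeeping in this last step: checking that the constant-chasing of Theorem~\ref{thm:Main2} (the admissible windows for $\sigma$ and $\tau$ under \eqref{eq:condition-on-p}, and the three lower bounds on $d$) goes through unchanged when $k_0$ is taken to be $\kappa_\nu$ and the smallness threshold $(4D)^{-p}$ for $\mu(A(k_0,R))/\mu(B(x,R)\cap\Om)$ is obtained from the trace rather than from fine continuity, together with the minor but slightly fussy matter of selecting the level $t$ (equivalently, how large $\nu$ must be) so that the borderline case $\xi=(m+M)/2$ is absorbed while keeping $\kappa_\nu$ in the range required by that argument.
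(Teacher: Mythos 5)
Your proposal is correct and follows the same route the paper takes: use the $L^1$-Lebesgue-point property of the trace at $\mathcal{H}$-a.e.\ $x\in\dOm$, pass via Chebyshev and the density condition~\eqref{density} to the statement that some superlevel (or, after replacing $u$ by $-u$, sublevel) set at a level $t\in(m,M)$ has $\mu$-density zero at $x$ relative to $\Om$, and then feed that smallness input into the De Giorgi iteration from the proof of Theorem~\ref{thm:Main2} (where the sign of $f$ only entered through Proposition~\ref{prop:semicont} as a means of producing exactly this density input). This matches the paper's own sketch preceding the statement of the theorem.
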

\end{document}